\newtheorem{thm}{Theorem}
\newtheorem{lem}[thm]{Lemma}
\newtheorem{cor}[thm]{Corollary}
\newtheorem{prop}[thm]{Proposition} 
\newtheorem{rem}[thm]{Remark}
\newtheorem{defn}[thm]{Definition}
\newtheorem{exmp}[thm]{Example}
\newtheorem{iden}[thm]{Identity}
\date{}
\begin{document}
\setlength{\baselineskip}{16pt}
\title{Continuous Analogues for the Binomial Coefficients and the Catalan Numbers}
\author{Leonardo Cano \ \ and \ \  Rafael D\'\i az}
\maketitle

\begin{abstract}
Using techniques from the theories of convex polytopes, lattice paths, and indirect influences on directed manifolds, we construct continuous analogues for the binomial coefficients and the Catalan numbers. Our approach for constructing these analogues can be applied to a wide variety of combinatorial sequences. As an application we develop a continuous analogue for the binomial distribution.
\end{abstract}

\section{Introduction}

In this work we construct continuous analogues for
the binomial coefficients and  the Catalan numbers. Our
constructions are based on the theory of convex polytopes, the theory of
lattice paths,  and the theory of indirect influences on directed manifolds. We
introduce our methodology for finding continuous analogues  --
applicable to many kinds of combinatorial objects -- trough the
following table:

\begin{center}
\begin{tabular}{|l | l|}
  \hline
   Combinatorial Object & Continuous Analogue   \\
  \hline   \hline
Lattice $\ \mathbb{Z}^d \ $   & Smooth manifold $\ \mathbb{R}^d$    \\
   \hline
Lattice step vector  $\ v \in \mathbb{Z}^d \ $   & Constant vector field $v$ on $\ \mathbb{R}^d$    \\
   \hline
   Lattice step vectors $\ v_1,...,v_k   \in   \mathbb{Z}^d \ $ & Directed manifold $\ (\mathbb{R}^d, v_1,...,v_k)$  \\
  \hline
  Lattice paths  & Directed paths   \\

  \hline

   Finite pattern decomposition & Countable pattern decomposition   \\
  \hline
    $\sharp$ Integer points in interior of polytopes & Volume of polytopes   \\
  \hline

  Binomial coefficients $\ {n \choose k}$  & Continuous binomial coefficients $\ {x \brace s}$   \\
  \hline
   Catalan numbers $\ c_n$   & Continuous Calatan numbers $\ C(x)$  \\
  \hline
\end{tabular}
\end{center}

A polytope in $\ \mathbb{R}^d \supseteq \mathbb{Z}^d\ $ gives rise to the weighted poset of its faces (ordered by inclusion,)
with the weight of a face being the number of integer points in its relative interior. Restricting attention to the lowest and highest elements of this poset, a couple of combinatorial problems arise whenever we are given a convex polytope $\ P \subseteq  \mathbb{R}^d: \ $ count the number of vertices of $\ P,\ $ and count the number of integer points in $\ P^{\circ},\ $ the relative interior of $\ P. \  $  Accordingly, a couple of different meanings can be given to the problem of finding a convex polytopal interpretation, or realization,  of a sequence $\ a_n \ $ of natural numbers:
\begin{description}

 \item[I.] Find a sequence $\ P_n \subseteq  \mathbb{R}^{d_n} \  $ of polytopes such that  $\ a_n  =  |\mbox{vertices}(P_n)| .$

 \item[II.] Find a sequence $\ P_n \subseteq  \mathbb{R}^{d_n} \  $ of polytopes such that $\ a_n  =  |P_n^{\circ} \cap \mathbb{Z}^{d_n}| .$
\end{description}

Clearly, in both cases, one can always find a (non-unique) sequence of polytopes with the required property, just as it happens when we consider interpretations of the natural numbers as the cardinality of arbitrary finite sets.  Thus,  we are actually interested in finding nice polytopal interpretations having additional properties. The reader may wonder why we count points in the interior of polytopes, and not in the whole polytope. To a great extent both choices are equally valid, and indeed they are tightly related by the M$\ddot{\mbox{o}}$bius inversion formula, and the Ehrhart reciprocity theorem \cite{E}. We give preponderance to interior integral points because that is what arises in our general constructions in Sections \ref{2} and \ref{3}. \\

For the Catalan numbers $\ c_n  =  \frac{1}{n+1}{2n \choose n} \ $  problem I \ admits a nice answer in terms of the Stasheff's  associahedra, which play a prominent role in the study of algebras associative up to homotopy, and particularly in the construction of the operad for $A_{\infty}$-algebras \cite{mss}. The associahedra were first constructed by Tamari, coming from a different viewpoint, who gave a combinatorial description of the poset of its faces \cite{mps}.\\

Solutions to  problem II \ lead naturally to the construction of continuous analogues for the sequence of natural numbers $\ a_n \ $ as follows: the numbers $\ a_n \ $ count the integral points in the interior of the polytopes $\ P_n \subseteq  \mathbb{R}^{d_n}, \  $ and we can think of the volume $\ \mathrm{vol}(P_n) \ $ as counting -- actually measuring --  points in  $\ P_n \ $ after the integrality restrictions are lifted.  Therefore, one feels entitled to regard the real numbers $\ \mathrm{vol}(P_n) \ $ as being continuous analogues for the natural numbers $\  a_n  =  |P_n^{\circ} \cap \mathbb{Z}^{d_n}|.$  Although a bit vague for the moment, this analogy will become much clearer when applied to the polytopes coming from the theory of lattice paths studied in this work. In this case our analogy simply amounts to replacing  lattice paths by directed paths (i.e. polygonal paths with specified tangent vectors), a process that can be intuitively grasped by comparing Figures 5 and 6. For more on the theory of lattice paths the reader may consult
Banderier and Flajolet \cite{fla}, Humphreys \cite{hum},  Krattenthaler \cite{kra}, Mohanty \cite{moha}, and Narayana \cite{nara}.\\

The relation between the numbers $ \ |P^{\circ} \cap \mathbb{Z}^{d}| \ $  and $\ \mathrm{vol}(P) \ $  for a convex polytope $\ P \ $ is much deeper than what one might naively think. Let us highlight a few points that   show the depth of  this relationship:
\begin{itemize}
  \item Consider the poset of subfaces of $\ P \ $ and its associated M$\ddot{\mbox{o}}$bius function $\mu$ \cite{ro}.
  The following identities hold:
  $$ |P \cap \mathbb{Z}^{d}| \  =  \sum_{F \ \mbox{subface of }  P}|F^{\circ} \cap \mathbb{Z}^{d}|  \ \ \ \  \mbox{and} \ \ \ \
  |P^{\circ} \cap \mathbb{Z}^{d}|  \  =  \sum_{F\ \mbox{subface of } P}\mu(F,P)|F \cap \mathbb{Z}^{d}|. $$
\item   Assume that $\ P \ $ has dimension $\ k \ $ and its vertices lie in $\ \mathbb{Z}^{d}. \ $ Erhart's theory, see Diaz and Robins \cite{dr}, Erhart \cite{E}, and Macdonald \cite{mac}, tell us that the functions
$$ \ P(n) \ = \ |nP \cap \mathbb{Z}^{d}| \  \ \ \ \ \mbox{and} \ \ \ \ \ Q(n) \ = \ |nP^{\circ} \cap \mathbb{Z}^{d}| \ $$
are polynomials of degree $\ k \ $ such that $\  P(n)  \ = \  (-1)^k Q(-n).\ $ Moreover, the degree $\ k \ $ coefficients of both $\ P(n) \ $ and $\ Q(n)\ $ are equal to the volume of $\ P$:
$$  \lim_{n \rightarrow \infty} \frac{P(n)}{n^k} \ \ = \ \ \lim_{n \rightarrow \infty} \frac{Q(n)}{n^k} \ \ = \ \ \mathrm{ vol}(P).$$

\item Suppose that the convex polytope $\ P \subseteq\mathbb{Z}^{d}\ $ has dimension $\ d, \ $  integer vertices,  and $d$-edges emanating from each vertex of $P$ which generate $\mathbb{Z}^{d}$. To this data one associates a toric variety $X_P$ and an holomorphic line bundle $\ L_P \rightarrow X_P \ $  such that:
      $$|P \cap \mathbb{Z}^{d}|  \ = \ \chi(L_P) \ = \ \mathrm{dim}\ H^0(X_P, L_P).$$ Thus $\ |P \cap \mathbb{Z}^{d}|\ $ counts independent sections of the line bundle $\ L_P \rightarrow X_P. \ $ The standard reference for this result is Danilov \cite{da}.

  \item The construction above can be understood in terms of symplectic manifolds and geometric quantization, see Guillemin \cite{gui},  Guillemin, Ginzburg, and Karshon \cite{ggk}, Hamilton \cite{ham}.  Deltzant \cite{del}  constructed a toric symplectic manifold $X_P$, via symplectic reduction, which comes with a K$\ddot{\mbox{a}}$hler structure and a pre-quantum line bundle $\ L_P, \ $ in the sense that first Chern class of  $\ L_P \ $   is the symplectic form of $\ X_P$. The holomorphic structure on  $\ X_P\ $ give rise to a polarization on $\ L, \ $ therefore $\ H^0(X_P, L_P)\ $ is the Hilbert space associated to $\ X_P \ $ in the  geometric quantization approach, see  \'Sniatycki \cite{sn}   and  Woodhouse \cite{w}.

  \item It follows from the  Duistermaat-Heckman theorem, see \cite{dh} and Guillemin \cite{gui},  that the phase space symplectic manifold $\ X_P\ $ and the convex polytope $\ P \ $ have the same volume. Therefore, in this case, the transition
      $$  \mathrm{vol}(P)  =  \mathrm{vol}(X_P) \ \ \ \ \ \ \ \longrightarrow \ \ \ \ \ \ \  |P \cap \mathbb{Z}^{d}|   =  \mathrm{dim}\ H^0(X_P, L_P) $$
is a numerical manifestation of the classical-to-quantum transition:
   $$X_P \ \ \ \ \ \ \ \longrightarrow \ \ \ \ \ \ \ H^0(X_P, L_P). \ $$

 \item A different sort of relation between $|P \cap \mathbb{Z}^{d}|$ and the volume of polytopes arises by considering the polytopal deformation $\ P_h \ $ of $\ P \ $ defined by deforming the equations defining $\ P \ $ by adding a small number $\ h \ $ to the constant term of each equation. Still with the same conditions on the polytope $\ P \ $ as above we have that:
      $$ |P\cap \mathbb{Z}^{d}| \  =  \ \mathrm{Todd}(P,\frac{\partial}{\partial h})\mathrm{ vol}(P_h)\bigg|_{h=0} \ \  \ \ \ \mbox{and} \ \ \ \ \ |P^{\circ} \cap \mathbb{Z}^{d}| \  =  \ \mathrm{Todd}(P,-\frac{\partial}{\partial h})\mathrm{ vol}(P_h)\bigg|_{h=0},$$
      where $\ \mathrm{Todd}(P,\frac{\partial}{\partial h}) \ $ is the operator obtained by substituting  $\ \frac{\partial}{\partial h}\ $ into an explicitly defined formal power series introduced by Todd. This result is due to  Khovanskii and Pukhlikov \cite{kp} and has  been extended, using different techniques,  to the case of simple polytopes ($d$-edges emanating from each vertex) by  Brion and Vergne \cite{brve},   Cappell and Shaneson \cite{cap}, Guillemin, Ginzburg, and Karshon \cite{ggk}, and Karshon, Sternberg,  and Weitsman
      \cite{ksw}.

\item Another approach -- applicable for a rational convex polytope $P$ -- relates   $|P \cap \mathbb{Z}^{d}|$ with the volume of the various faces of $P$:
$$ |P \cap \mathbb{Z}^{d}| \ \ = \ \ \sum_{F\ \mbox{subface of } P}c(F,P) \mathrm{ vol}(F)$$
where the coefficients $\ c(F,P)\ $ are rational numbers which satisfy the properties of being local and computable, with the measure on faces defined in terms of the lattice generators of the affine extension of each face. This result is due
to Berline and Vergne \cite{blve}.

   \item The counting of lattice points inside a polytope has a long history which we do not attempt to summarize, the interested reader may consult  Brion \cite{br}, De Loera \cite{dlo}, Lagarias and  Ziegler \cite{lag}, and the references therein.  We remark that a polynomial time algorithm for such counting was introduced by Barvinok \cite{bar}.

\end{itemize}
Our construction of continuous analogues for certain type of combinatorial objects is best explained via the following flow
diagram:

$$ \mbox{combinatorial object}  \ \ \    \longrightarrow \ \ \   \mbox{lattice path reformulation}  \ \  \  \longrightarrow   $$
$$\ \ \  \mbox{finite decomposition over patterns}  \ \ \  \longrightarrow \ \ \ \mbox{volume of polytopes} \ \ \  \longrightarrow  \ \ \   $$
$$\mbox{countable decomposition over patterns}   \ \ \ \longrightarrow \ \ \  \mbox{continuous object.}$$


In this work we consider problem II for the binomial and Catalan numbers applying the methodology outlined above and described in details
in Sections 2 and 3. In both cases we begin by decomposing the given sequence of numbers as finite sums over time and patterns, where each summand counts the interior points of a lattice polytope.  The starting point to achieve this decomposition is to describe our given sequence of numbers as counting lattice paths, e.g. the Catalan numbers count Dyck paths. Once we have an interpretation of each summand as counting interior points of convex polytopes, we define our continuous analogous by removing the integrality restrictions, i.e. we compute volume of polytopes and replace finite sums by countable sums. The construction of continuous analogues for the binomial coefficients leads to the development of a continuous analogue for the discrete binomial distribution whose density is shown in Figures 2 and 3.\\

Our constructions can be motivated from a physical point of view as
follows. Ever since Feynman reformulated quantum mechanics in
terms of path integrals \cite{f}, constructing a rigorous theory
for such integrals has been a major challenge for mathematicians.
Counting (weighted) lattice paths may be regarded as a fully
discretized version of this problem. Our proposal -- from this viewpoint --  is to extend the
domain of allowed paths:
$$ \mbox{lattice\  paths}  \ \ \  \subseteq  \ \ \ \mbox{directed paths} \ \ \ \subseteq \ \ \ \mbox{continuous paths} ,$$
from lattice paths to directed paths, which form a moduli space and yet by construction retain a strong combinatorial flavor. \\

We stress that we are after continuous analogues rather than extensions to continuous variables. It is well-known that the latter can be achieved for the binomials -- and thus for the Catalan numbers -- with the help of the classical gamma and beta functions. This work takes part in our program aimed to bring geometric methods to the study of problems arising from the theory of complex networks \cite{cd,d,dg,dv}.

\section{Lattice Paths and Patterns}\label{2}

Let us recall the settings upon which the theory of lattice paths is built \cite{fla}. We fix throughout Sections \ref{2} and \ref{3} the following data: a dimension $\ d \in \mathbb{N}_{>0},\ $ and  step vectors $\ V=\{ v_1,...,v_k\} \ \subseteq \  \mathbb{Z}^d \ \subseteq \ \mathbb{R}^d $
with index set $\ [k] =\{1,..,k\}$.\\

 A lattice path (with steps in $V$) from $\ p \ $ to $\ q \ $ in $\ \mathbb{Z}^d\ $
is given by a tuple of points $\ (p_0,...,p_n)\ $ in $\ \mathbb{Z}^d, \ $ with $\ n\geq 1, \ $  such that, for  $\ i \in [n],\ $ we have
$\ \ p_0  =  p, \ \ \ p_n =  q, \ \ \  p_i  -  p_{i-1} \ \in \ V. \ $
Thinking of time as a discrete variable,  we identify the parameter $\ n \ $ with the travel time for a particle starting at $\ p \ $ and moving towards $\ q \ $ trough the  path $\ (p_0,...,p_n).\ $ We add a zero time path from each lattice point to itself; this convention turns the set of lattice points and lattice paths among them into the objects and morphisms, respectively, of a category with composition given by concatenation.\\

The set  $\ \mathrm{L}_{p,q}\ $ of lattice paths from $\ p \ $ to $\ q, \ $ can be described as
$$\mathrm{L}_{p,q} \ = \ \bigsqcup_{l =0}^{\infty}\mathrm{L}_{p,q}(l) \ = \
\bigsqcup_{l =0}^{\infty}\big\{(a_1,...,a_l) \in [k]^l \ \ \big| \ \  p +v_{a_1}+ \cdots + v_{a_l}  = \ q \big\}, $$
where $ \ \mathrm{L}_{p,q}(l) \ $ is the set of lattice paths from $\ p \ $ to $\ q \ $ displayed in time $\ l.$\\

A pattern (of directions) of length $\ n +1 \ $  on the set of indexes $\ [k]\ $ is given by a $\ (n+1)$-tuple $\ c=(c_0, c_1,...,c_n)\ $ such that $\ c_i \in [k] \ $ and $\ c_i \neq c_{i+1}$.  Let $\ D(n,k) \ $ be the set of all patterns of
length $\ n +1. \ $ We have a natural map that associates a pattern to each lattice path by contracting contiguous repeated indices.  For example  set $\ n=8, \   k=3, \ $ and consider a lattice path $\ (2,2,1,3,3,3,1,1) \in \mathrm{L}_{p,q}(8).\ $ The  associated pattern is $\ (2,1,3,1) \in D(3,3).$ We formally add a pattern of length $0$.\\

Going back to our general settings, we have that
$$\ \ \ \displaystyle \mathrm{L}_{p,q} \ = \ \bigsqcup_{l=0}^{\infty} \bigsqcup_{n=0}^{l-1} \bigsqcup_{c\in D(n,k)}\mathrm{L}_{p,q}^c(l), \ \ \ $$
where $\ \mathrm{L}_{p,q}^c(l) \ $ is the set of lattice paths from $\ p \ $ to $\ q \ $ displayed in time $\ l \ $ and with associated pattern equal to $\ c  \in  D(n,k)$.\\

\begin{prop}{\em \
\begin{enumerate}
  \item The number of lattice paths from  $\ p \ $ to $\ q \ $ displayed in time $\ l \ $ is given by
$$|\mathrm{L}_{p,q}(l)| \ \ =  \ \  \sum_{n=0}^{l-1}\sum_{c\in D(n,k)}|\mathrm{L}_{p,q}^c(l)|,$$
where $\ \mathrm{L}_{p,q}^c(l) \ $ is the set of tuples $\ (s_0,...,s_n)\in \mathbb{N}_{>0}^{n+1} \ $ such that
$$p + s_0v_{c_0} + \cdots + s_nv_{c_n} =  q \ \ \ \ \  \mbox{and} \ \ \ \ \ s_0 + \cdots + s_n =  l.$$
  \item The number of lattice paths  from  $\ p \ $ to $\ q, \ $ if finite, \ is given by
$$|\mathrm{L}_{p,q}| \ \ =  \ \  \sum_{l=0}^{\infty}\sum_{n=0}^{l-1}\sum_{c\in D(n,k)}|\mathrm{L}_{p,q}^c(l)|.$$
\end{enumerate}

}
\end{prop}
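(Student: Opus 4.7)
The plan is to establish a bijection, via run-length encoding, between the set $\mathrm{L}_{p,q}(l)$ and the disjoint union $\bigsqcup_{n=0}^{l-1} \bigsqcup_{c\in D(n,k)} \mathrm{L}_{p,q}^c(l)$, from which part 1 follows immediately; part 2 is then obtained by summing over $l$.

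First I would define the encoding. Given a lattice path $(a_1,\dots,a_l) \in [k]^l$ with $l \geq 1$, partition $\{1,\dots,l\}$ into maximal intervals on which $a_i$ is constant, say into $n+1$ intervals of lengths $s_0, s_1,\dots,s_n$ and constant values $c_0, c_1,\dots,c_n$. Maximality of the intervals forces $c_i \neq c_{i+1}$, so $(c_0,\dots,c_n) \in D(n,k)$, while $(s_0,\dots,s_n) \in \mathbb{N}_{>0}^{n+1}$ with $s_0+\cdots+s_n = l$. Since each run has length at least $1$, we have $1 \leq n+1 \leq l$, so $n$ ranges exactly over $\{0,\dots,l-1\}$.

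Next I would exhibit the inverse, which sends a pair $(c,(s_0,\dots,s_n))$ to the concatenated tuple in which each $c_i$ is repeated $s_i$ times; the two maps are mutual inverses by direct inspection. Under this bijection the step sum $\sum_{i=1}^{l} v_{a_i}$ becomes $\sum_{i=0}^{n} s_i v_{c_i}$, so the endpoint condition $p + v_{a_1}+\cdots+v_{a_l} = q$ translates precisely into the pair of conditions $p + s_0 v_{c_0} + \cdots + s_n v_{c_n} = q$ and $s_0+\cdots+s_n = l$ defining $\mathrm{L}_{p,q}^c(l)$. This identifies $\mathrm{L}_{p,q}(l)$ with $\bigsqcup_{n=0}^{l-1} \bigsqcup_{c \in D(n,k)} \mathrm{L}_{p,q}^c(l)$, yielding part 1. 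Part 2 then follows by summing over $l \geq 0$, using the decomposition $\mathrm{L}_{p,q} = \bigsqcup_{l \geq 0} \mathrm{L}_{p,q}(l)$ stated just before the proposition.

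There is no real obstacle here: the statement is a tautological unpacking of the definitions once the run-length bijection is written down. The only delicate point is the edge case $l=0$, for which $\mathrm{L}_{p,q}(0)$ is nonempty only when $p=q$ and reduces to the single zero-time path; this corresponds to the formally added pattern of length $0$ and is handled by interpreting the empty sum $\sum_{n=0}^{-1}$ in the natural way. Absolute convergence is not an issue in part 2 since, by hypothesis, $|\mathrm{L}_{p,q}|$ is assumed finite and all summands are nonnegative integers.
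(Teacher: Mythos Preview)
Your proposal is correct and matches the paper's approach exactly: the paper does not give a separate proof of this proposition, instead treating it as an immediate consequence of the pattern decomposition $\mathrm{L}_{p,q} = \bigsqcup_{l}\bigsqcup_{n}\bigsqcup_{c}\mathrm{L}_{p,q}^c(l)$ set up in the preceding paragraphs via the ``contracting contiguous repeated indices'' map, which is precisely your run-length encoding bijection. Your write-up simply makes explicit the inverse map and the verification that the endpoint and time conditions correspond, which the paper leaves to the reader.
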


The main problem in lattice path theory is to count the number of lattice paths joining a pair of lattice points. Usually further restrictions are imposed on the allowed paths. For example, one may want to count lattice paths that are restricted to visiting points in a subset of $\ \mathbb{Z}^d, \ $ which we assume to be the set of integral points of a convex polyhedron $\ H \subseteq\mathbb{R}^d. \ $
let $\ \mathrm{L}_{p,q}^{H}(l)\ $ be the set of time $\ l \ $ lattice paths lying in $\ H.$
Also let $\ \mathrm{L}_{p,q}^{c,H}(l)\ $ be the set of lattice paths fully included in  $\ H \ $ of time $ \ l, \ $ and pattern $ \ c. \ $ \\

\begin{prop}{\em Let $\ p,q \in H.$
\begin{enumerate}
  \item The number of lattice paths from  $\ p \ $ to $\ q \ $ fully included in the convex polyhedron $\ H \ $ and displayed in time $\ l \ $ is given by $$|\mathrm{L}_{p,q}^H(l)| \ \ =  \ \  \sum_{n=0}^{l}\sum_{c\in D(n,k)}|\mathrm{L}_{p,q}^{c,H}(l)|,$$
where $\ \mathrm{L}_{p,q}^{c,H}(l)\ $ is the set of tuples $\ (s_0,...,s_n)\in \mathbb{N}_{>0}^{n+1}\ $ such that the following conditions hold for $\ 0 \leq i \leq n-1: \ $
$$p + s_0v_{c_0} + \cdots + s_iv_{c_i} \in H, \ \ \ \ \ \ p + s_0v_{c_0} + \cdots + s_nv_{c_n} =  q, \ \ \ \ \ \ s_0 + \cdots + s_n =  l.$$

  \item The number of lattice paths from  $\ p \ $ to $\ q \ $ fully included in the convex polyhedron $\ H, \ $  if finite,\ is given by $$|\mathrm{L}_{p,q}^H| \ \ =  \ \ \sum_{l=0}^{\infty} \sum_{n=0}^{l-1}\sum_{c\in D(n,k)}|\mathrm{L}_{p,q}^{c,H}(l)|.$$
\end{enumerate}
}
\end{prop}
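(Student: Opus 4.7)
The strategy is to replay the proof of Proposition~1 with the single modification that one restricts the tuples $(s_0,\dots,s_n)$ to those whose partial sums remain in $H$, and then to observe that by convexity of $H$ it suffices to impose this condition at the ``corner'' lattice points of the path. Concretely, for a fixed pattern $c=(c_0,\dots,c_n)\in D(n,k)$ and fixed time $l$, I would set up the usual bijection between the set of time-$l$ lattice paths from $p$ to $q$ with associated pattern $c$ and the set of tuples $(s_0,\dots,s_n)\in\mathbb{N}_{>0}^{n+1}$ satisfying $\sum s_i=l$ and $p+\sum s_i v_{c_i}=q$: send such a tuple to the lattice path which repeats the step $v_{c_i}$ exactly $s_i$ times. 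This bijection is exactly the one implicit in Proposition~1, and it clearly respects the extra datum of pattern and time.

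The one new point is to express the condition ``the path lies entirely in $H$'' purely in terms of the tuple $(s_0,\dots,s_n)$. I would argue as follows. Between the corner points $p_i:=p+s_0v_{c_0}+\cdots+s_iv_{c_i}$ for $-1\le i\le n$ (with $p_{-1}:=p$), the path moves in a straight line along $v_{c_i}$. Since $H$ is a convex polyhedron and the endpoints $p_{i-1},p_i$ of each segment lie in $H$ whenever the corner points do, convexity guarantees that every intermediate lattice point on the segment lies in $H$ as well. Hence the path lies in $H$ if and only if $p_i\in H$ for every $i$. Because $p_{-1}=p\in H$ and $p_n=q\in H$ hold by hypothesis, the only nontrivial constraints are $p_i\in H$ for $0\le i\le n-1$, which is exactly the list of inequalities appearing in the statement. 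This yields part~(1).

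Part~(2) is then immediate: summing the identity of part~(1) over $l$ gives $|\mathrm{L}_{p,q}^H|=\sum_{l\ge 0}|\mathrm{L}_{p,q}^H(l)|$, which is the total count by the disjoint union $\mathrm{L}_{p,q}^H=\bigsqcup_l \mathrm{L}_{p,q}^H(l)$; the hypothesis that this count is finite legitimizes the rearrangement of the triple sum. The main (minor) obstacle is simply to verify carefully that no intermediate lattice point on a straight segment between two corner points of $H$ can escape $H$, which is a one-line consequence of convexity; there is no genuine difficulty beyond this, since all combinatorial bookkeeping is identical to that of Proposition~1.
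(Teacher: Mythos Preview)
Your proposal is correct and matches what the paper is doing: the proposition is stated there without proof, as a direct extension of Proposition~1, and your argument spells out exactly the implicit reasoning---the bijection from Proposition~1 together with the observation that, by convexity of $H$, the constraint ``path lies in $H$'' reduces to checking the corner points $p_i$ only. There is nothing to add.
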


\section{From Lattice Paths to Directed Paths}\label{3}

Our next goal is to provide a suitable setting for "counting" directed paths, for  which we  keep the same set of allowed
directions $\ V  =  \{v_1,...,v_k \} \subseteq \mathbb{Z}^k \ $ as for lattice paths,
while  lifting the discrete time restriction, i.e. we consider  tuples
$\ (p_0,...,p_n) \in \mathbb{R}^d \ $ such that
$$\ \ p_0 \ = \ p, \ \ \ \ \ p_n\ = \ q, \ \ \ \ \ p_i \ = \ p_{i-1} + s_i v , \ \ \ \ \mbox{with}\ \  v \in V, \ \
\mbox{and} \ \ s_i \in \mathbb{R}_{\geq 0}. \ $$  The total travel time   $\ t\ = \ s_1 \ + \ \cdots \ + \ s_n\ $ no longer has to be an integer; hence
we are facing a moduli space of paths rather than a discrete set of paths.\\

To formalize the "counting" of such paths we turn to  our work on indirect influences on directed manifolds \cite{cd}.
Essentially this approach  give us a way to put measures on the various  components of the space of directed paths on a directed manifold. Fortunately, for our present purposes, we can proceed quite independently in an essentially self-contained fashion.\\

Recall from \cite{cd}\  that a directed manifold is a smooth manifold together with a tuple of vector fields on it. \  We are going to work with the directed manifold $\ (\mathbb{R}^d, v_1,...,v_k). \ $   A directed path from $\ p \in \mathbb{R}^d \ $ to $\ q \in \mathbb{R}^d \ $ displayed in time $\ t>0\ $ and going through  $\ n\geq 0\ $ changes of directions  is  parameterized by a pair $\ (c,s)\ $ with the following properties:

\begin{itemize}
\item $c=(c_0,...,c_n)\ $ is a pattern in $\ D(n,k).$

\item $s=(s_0,...,s_n)\ $  is  a $\ (n+1)$-tuple such that $\ s_0+\cdots+s_n=t, \ $  with $\ s_i \in \mathbb{R}_{\geq 0\mathbb{}}.\ $ We say that $\ s \ $ defines the time distribution of the directed path associated to $\ (c,s), \ $ and let $\ \Delta_{n}^t \ $ be the $\ n$-simplex of all such tuples. We regard $\ \Delta_{n}^t \ $ as a subset of the space $\ \mathbb{R}^{n+1}\ $ endowed with its canonical inner product.

\item  $\ (c,s)\ $ determines a $\ (n+2)$-tuple of points $ \ (p_0,\ldots,p_{n+1})\ \in \ (\mathbb{R}^d)^{n+2} \ $
given by: $$\ p_0\ = \ p \ \ \ \ \ \mbox{and} \ \ \ \ \ p_{i}\ = \ p_{i-1} \ + \ s_{i-1}v_{c_{i-1}}  \ \ \ \ \mbox{for}\ \ \ \ 1 \leq i \leq n+1.$$

\item $\ (c,s) \ $ must be such that $\ p_{n+1}(c,s) \ = \ q.$

\end{itemize}

The pair $\ (c,s)\ $ determines the directed polygon path  $$\varphi_{c,s}:[0, s_0 + \cdots + s_n] \ \ \simeq \ \ [0,s_0] \ \underset{s_0,0}{\bigsqcup}\ \cdots \ \underset{s_{n-1},0}{\bigsqcup} \ [0,s_n] \ \ \longrightarrow \ \ M  $$
from $\ p \ $ to $\ q \ $ where the restriction of $\ \varphi_{c,s}\ $ to the interval $\ [0,s_i] \ $ is given by
$$\varphi_{c,s}|_{[0,s_i]}(r) \ \ = \ \ p_{i} \ + \  rv_{c_i}  \ \ \ \ \ \mbox{for } \ \ \ r\in [0,s_i].$$
We say that the points $\ p_0,...,p_n\ $ are the peaks of the path $\ \varphi_{c,s}. $

The moduli space $\ \Gamma_{p,q}(t)\ $ of directed paths from $\ p \ $ to
$\ q \ $ displayed in time $\ t>0 \ $ is given by
$$\Gamma_{p,q}(t)\ \ = \ \ \coprod_{n=0}^{\infty}\coprod_{c \in D(n,k)} \{ s \in \Delta_{n}^{t} \ | \ p_{n+1}(c,s)  =  q\} \ \ = \ \
\coprod_{n=0}^{\infty}\coprod_{c \in D(n,k)} \Gamma_{p,q}^c(t). $$ In addition we formally set
$$\Gamma_{p,q}(0) \ \ = \ \ \Gamma_{p,q}^{\emptyset}(0) \ \ = \ \
\left\{
\begin{array}{lcl}
\{p\} \ \ \ \   \mbox{if} \ \ p=q,\\
& & \\
\ \emptyset \ \ \ \ \   \mbox{ otherwise}.
\end{array}
\right.$$ The unique path in the latter set has the empty pattern. \\

Our guiding principle in this work is that one can think of the space $\ \Gamma_{p,q}(l) \ $ as being a continuous analogue
of the set $\ \mathrm{L}_{p,q}(l) \ $ of lattice paths from $\ p \ $ to $\ q \ $ displayed in time $\ l \geq 0. \ $  Note that in $\ \Gamma_{p,q}(t) \ $ neither $\ p \ $ nor $\ q \ $ nor $\ t \ $ are restricted to be integers points. Even if they are integers $\ \Gamma_{p,q}(t) \ $ is still a  larger space than $\ \mathrm{L}_{p,q}(t) \ $ since in $\ \Gamma_{p,q}(t) \ $ the intermediary peaks are not restricted to be integer points.\\

\begin{prop}\label{p}
{\em Consider the directed manifold $\ (\mathbb{R}^d, v_1,...,v_k), \ $ let $\ H  \ \subseteq \ \mathbb{R}^d \ $ be a convex polyhedron, let
$\ p,q \in H, \ $ and $\ t \in \mathbb{R}_{>0}.$
\begin{enumerate}
  \item For $\  c=(c_0,...,c_n) \in D(n,k), \ $ the space $\ \Gamma_{p,q}^c(t) \ $ is the convex polytope given by:
$$\Gamma_{p,q}^c(t) \ = \ \bigg\{(s_0,...,s_n)\in \mathbb{R}_{\geq 0}^{n+1} \ \bigg| \
p+ s_0v_{c_0} + \cdots + s_nv_{c_n} =  q, \ \ s_0 + \cdots + s_n =  t  \bigg\} .$$

\item $ \mathrm{L}_{p,q}^c(t) \ $ is the set of integer points in the interior of $\ \Gamma_{p,q}^c(t). \ $

\item For $\  c\in D(n,k), \ $ let $\ \Gamma_{p,q}^{c,H}(t) \ $ be the subset of $ \ \Gamma_{p,q}^{c}(t) \ $ consisting points whose associated
path $\ \varphi_{c,s}\ $ lies entirely  in $\ H. \ $
The moduli space $ \ \Gamma_{p,q}^{c,H}(t) \ $ is the convex polytope consisting  of all
tuples $\ (s_0,...,s_n)\in \mathbb{R}_{\geq 0}^{n+1} \ $ such that the following conditions hold for $\ 0 \leq i \leq n-1: \ $
$$p + s_0v_{c_0} + \cdots + s_iv_{c_i} \in H, \ \ \ \ \ \ p + s_0v_{c_0} + \cdots + s_nv_{c_n} =  q, \ \ \ \ \ \ s_0 + \cdots + s_n =  l.$$

\item  $\ \mathrm{L}_{p,q}^{c,H}(t) \ $ is the set of integer points in the interior of $\ \Gamma_{p,q}^{c,H}(t). \ $
\end{enumerate}
}
\end{prop}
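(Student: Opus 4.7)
The plan is to prove parts 1--4 in sequence, with parts 1 and 3 amounting to unpacking the defining formula for $\Gamma_{p,q}^c(t)$ as a subset of the affine simplex $\Delta_n^t$, and parts 2 and 4 then identifying the lattice path sets of the previous propositions with the integer points in the relative interiors of these polytopes.

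For part 1, I would start from the definition $\Gamma_{p,q}^c(t) = \{ s \in \Delta_n^t \mid p_{n+1}(c,s) = q \}$ and expand the recursion $p_{i+1} = p_i + s_i v_{c_i}$ to obtain $p_{n+1}(c,s) = p + \sum_{i=0}^n s_i v_{c_i}$. Combined with the defining inequalities $s_i \geq 0$ and the affine equation $\sum s_i = t$ that cut out $\Delta_n^t$, the set $\Gamma_{p,q}^c(t)$ is carved out of $\mathbb{R}^{n+1}$ by a finite list of linear equations and inequalities, hence is a convex polytope with the description given. For part 3, the requirement that the polygonal path $\varphi_{c,s}$ lie entirely in $H$ reduces, by convexity of $H$, to the condition that each peak $p_{i+1}$ lies in $H$, since each segment of $\varphi_{c,s}$ is the convex hull of two consecutive peaks. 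Substituting $p_{i+1} = p + \sum_{j \leq i} s_j v_{c_j}$ into the defining inequalities of $H$ yields additional linear constraints, so $\Gamma_{p,q}^{c,H}(t)$ is again a convex polytope with the stated description.

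For parts 2 and 4, the key point is the correct identification of the relative interior. Since $\Gamma_{p,q}^c(t)$ lies inside the hyperplane $\sum s_i = t$ it is positive-codimensional in $\mathbb{R}^{n+1}$, so interior must be read as relative interior in its affine span; generically this is exactly the subset where $s_i > 0$ for every $i$. Restricted to integer points, $s_i > 0$ is the same as $s_i \in \mathbb{N}_{>0}$. Given such an integer tuple, the lattice path obtained by concatenating $s_i$ consecutive copies of the direction $v_{c_i}$ has total time $\sum s_i = t$, goes from $p$ to $q$, and its contracted pattern is exactly $c$ precisely because the pattern condition $c_i \neq c_{i+1}$ guarantees that no further contraction occurs. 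Conversely, every element of $\mathrm{L}_{p,q}^c(t)$ encodes and is encoded by a unique such integer tuple. The same argument applied inside $\Gamma_{p,q}^{c,H}(t)$, together with the observation that a polygonal path stays in the convex set $H$ iff all its peaks do, establishes part 4.

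The main obstacle I anticipate is being careful about the sense of \emph{interior}: one must check that the equality constraints $p_{n+1}(c,s) = q$ and $\sum s_i = t$ do not force additional coordinate hyperplanes $s_i = 0$ to contain the whole polytope, for otherwise some genuine lattice paths with pattern $c$ would lie on the relative boundary rather than in the relative interior. This is handled by noting that as soon as a single integer point with all $s_i \geq 1$ exists (which is the case exactly when $\mathrm{L}_{p,q}^c(t)$ is non-empty), the relative interior of $\Gamma_{p,q}^c(t)$ coincides with $\{ s \in \Gamma_{p,q}^c(t) \mid s_i > 0 \text{ for all } i \}$. The remaining verification that the bijection between strictly positive integer tuples and lattice paths with associated pattern exactly $c$ is well-defined reduces to the contiguous-index contraction procedure described at the start of Section \ref{2}.
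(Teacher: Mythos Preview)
The paper does not actually supply a proof for this proposition; it is stated as a direct consequence of the definitions introduced in Sections~\ref{2} and~\ref{3} and left to the reader. Your proposal is correct and follows exactly the route the authors implicitly have in mind: unwind the recursion $p_{i+1}=p_i+s_iv_{c_i}$ to get the affine description in part~1, use convexity of $H$ to reduce containment of the polygonal path to containment of its peaks for part~3, and match the description of $\mathrm{L}_{p,q}^c(t)$ from Proposition~1 (resp.\ $\mathrm{L}_{p,q}^{c,H}(t)$ from Proposition~2) against the strictly positive integer points of the polytope for parts~2 and~4.

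Your discussion of the relative-interior subtlety goes beyond anything the paper makes explicit and is the only place where any real care is needed. One small refinement: your argument that the relative interior coincides with $\{s_i>0 \text{ for all } i\}$ whenever $\mathrm{L}_{p,q}^c(t)\neq\emptyset$ is fine for the forward inclusion, but strictly speaking you should also note that when the affine span of $\Gamma_{p,q}^c(t)$ fails to meet the open orthant (so that $\mathrm{L}_{p,q}^c(t)=\emptyset$), one must still verify that no integer point with some $s_i=0$ lies in the relative interior. In the applications the paper cares about (Sections~\ref{4} and~\ref{5}) the step vectors are linearly independent and this degenerate case does not arise, so the authors safely ignore it; for a fully general statement you would want to add a word on this, but it does not affect anything downstream in the paper.
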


Next we introduce our main definition in this work.

\begin{defn}\label{vol}
{\em
The volume of the moduli space of directed paths $ \ \Gamma_{p,q}(t) \ $ is given by:
$$\mathrm{vol}(\Gamma_{p,q}(t)) \ \ = \ \ \sum_{n=0}^{\infty}\sum_{c \in D(n,k)} \mathrm{vol}(\Gamma_{p,q}^c(t)) .$$
}
\end{defn}

\begin{rem}{\em
To compute the volume of a polytope $ \ P \subseteq  \ \Delta_{n}^t \ $ we regard it as
a top dimensional subset of its affine linear span $\  \widehat{P} \ \subseteq \ \mathbb{R}^{n+1}, \ $ and compute its volume with respect to the Lebesgue measure on $\  \widehat{P} \ $ induced by the inner product on $\ \mathbb{R}^{n+1}. \ $
As it stands, there is no guarantee  that the infinite sum above is convergent. Nevertheless, it turns out to be convergent in  the examples developed in Sections 4 and 5.}
\end{rem}

\section{Continuous Binomials Coefficients}\label{4}

The binomial coefficient $ {m \choose n}  $ counts sets of cardinality $ n  $ within a set of cardinality $\ m. $   To construct continuous analogues for the binomial coefficients we need a lattice path representation for them.
Consider the step vectors $$\ V\  = \ \{(1,0), (0,1)\} \ \subseteq \ \mathbb{Z}^2 \ \subseteq  \ \mathbb{R}^2. \ $$  It is well-known that  the binomial coefficient $\ {m \choose n}\ $
is such that
$${m \choose n} \ \ = \ \  \big|\big\{\mbox{lattice \ paths \ from } \ (0,0) \ \ \mbox{to}\ \ (n,m-n) \big\}\big|.$$
Notice that such a lattice path is displayed in time $\ m. \  $
Thus according to our general methodology for constructing continuous analogues  we  should consider the directed manifold $\ (\mathbb{R}^2, (1,0), (0,1)),\ $  and compute the volume of  the moduli spaces of directed paths $\ \Gamma_{(0,0),(x,y)}(t); \ $ we denote the latter space by $\ \Gamma(x,y)\ $ as it is empty unless $\ t=x+y. \ $

\begin{defn}\label{c}
{\em
For $  0 <  s < x\ $  in $\ \mathbb{R}, $ the continuous binomial coefficient $\ {x \brace s}\ $ is given  by
$${x \brace s} \ \ = \ \ \mathrm{vol}(\Gamma(s,x-s)) \ \ = \ \ \sum_{n=0}^{\infty}\sum_{c \in D(n,k)} \mathrm{vol}(\Gamma^c(s,x-s)) .$$
The domain of the symbol $\ {x \brace s}\ $  is extended by continuity to $\  0 \leq  s \leq  x. \ $   }
\end{defn}

Intuitively, $\ {x \brace s}\ $  is the total measure of the set of paths starting
at the origin and built with horizontal and vertical moves, with
travelling time in the horizontal direction of $\ s, \ $ and
travelling time in the vertical direction of $\ x-s. \ $ Figure 1
shows a couple of directed paths accounted for by the continuous binomial coefficient $\ {\pi \brace e}.$
\begin{center}
\psset{unit=3cm}
    \begin{pspicture}(0,-0.4)(3.5,0.5)
        \psline[linewidth=1pt]{->}(0,0)(1.2,0)
        \rput(1,-0.1){$1$}
        \rput(2,-0.1){$2$}
        \rput(3,-0.1){$3$}
        \rput(-0.15,0.3){0.3}
        \psline[linewidth=1pt,linestyle=dotted]{}(0,0)(0,0.6)
        \psline[linewidth=1pt,linestyle=dotted]{}(0,0)(3.2,0)
        \psline[linewidth=1pt]{->}(1.2,0)(1.2,0.2)
        \psline[linewidth=1pt]{->}(1.2,0.2)(2.1,0.2)
        \psline[linewidth=1pt]{->}(2.1,0.2)(2.1,0.4)
        \psline[linewidth=1pt]{->}(2.1,0.4)(2.7182,0.4)
        \psline[linewidth=1pt]{->}(0,0)(0,0.3)
        \psline[linewidth=1pt]{->}(0,0.3)(2.7182,0.3)
        \psline[linewidth=1pt]{->}(2.7182,0.3)(2.7182,0.4)
        \psdot[dotstyle=*,
    dotsize=4.5pt](0,0)
    \psdot[dotstyle=*,
    dotsize=4.5pt](2.7182,0.4)
    \rput(3.05,0.5){$(e,\pi-e)$}
       \rput(1.5,-0.3){Figure 1. Directed paths in $\ \Gamma(e,\pi-e)\ $ with patterns $\ (1,2,1,2,1)\ $ and $\ (2,1,2).$}
    \end{pspicture}
\end{center}

To compute explicitly the continuous binomial coefficients we used the following identity shown in \cite{cd}.

\begin{iden}\label{iden}
{\em The volume $\ \mathrm{vol}(\Gamma(x,y))\ $  of the convex polytope $\ \Gamma(x,y) \ $ is given by
$$\sum_{n=0}^\infty\Big(2\frac{x^{n}y^n}{n!n!}\ + \ \frac{x^{n+1}y^{n}}{(n+1)!n!}\ +\ \frac{x^{n}y^{n+1}}{n!(n+1)!}\Big) \ \ = \ \ 2\sum_{n=0}^\infty \frac{x^{n}y^n}{n!n!}\  +  \ (x+y)\sum_{n=0}^\infty\frac{x^{n}y^{n}}{n!(n+1)!}.$$
}
\end{iden}

\begin{rem}{\em
The presence of a couple of factorials in the denominators of the summands in formula for $\ \mathrm{vol}(\Gamma(x,y))\ $  from the proof of Theorem \ref{cn}  guarantees uninform convergency. Similar remarks will apply for all the power series appearing in this section.
}
\end{rem}

\begin{thm}\label{cn}
{\em For  $\  0 \leq  s \leq  x\ $ the continuous binomial function $\ {x \brace s}\ $ is  given by
$$2\sum_{0 \leq a \leq b, \ a+b\  \mathrm{even}}(-1)^{\frac{b-a}{2}}{b \choose \frac{a+b}{2}}\frac{x^a}{a!}\frac{s^b}{b!} \ \ + \ \
\sum_{0 \leq a \leq b-1, \ a+b\
\mathrm{odd}}(-1)^{\frac{b-a-1}{2}}{b \choose
\frac{a+b+1}{2}}\frac{x^a}{a!}\frac{s^b}{b!} \ \ \ +  $$
$$\sum_{0 \leq a \leq b+1, \ a+b\  \mathrm{odd}}(-1)^{\frac{b-a+1}{2}}{b \choose \frac{b-a+1}{2}}\frac{x^a}{a!}\frac{s^b}{b!} \ \ \ =$$
$$ 2\sum_{n=0}^\infty \frac{s^{n}(x-s)^n}{n!n!}\  +  \ x\sum_{n=0}^\infty\frac{s^{n}(x-s)^{n}}{n!(n+1)!}\ \ = \ \
\sum_{n=0}^\infty(x+2n+2)\frac{s^{n}(x-s)^{n}}{n!(n+1)!}.$$
}
\end{thm}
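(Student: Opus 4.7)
The plan is to deduce the theorem from Identity~\ref{iden} (which we may assume), together with two routine computations: an algebraic regrouping and a binomial expansion. By Definition~\ref{c}, ${x \brace s} = \mathrm{vol}(\Gamma(s, x-s))$; applying Identity~\ref{iden} with $x \mapsto s$ and $y \mapsto x-s$ immediately yields the penultimate expression
$$2\sum_{n=0}^\infty \frac{s^n(x-s)^n}{n!\,n!}\;+\;x\sum_{n=0}^\infty \frac{s^n(x-s)^n}{n!\,(n+1)!}.$$
The final equality in the theorem is then obtained in one line by writing $\tfrac{2}{n!\,n!} = \tfrac{2n+2}{n!\,(n+1)!}$ and combining with the $x$-coefficient in the second series.

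The nontrivial task is to derive the opening triple-sum expression. My approach is to expand everything as a power series in $x$ and $s$ and match coefficients of $\tfrac{x^a}{a!}\tfrac{s^b}{b!}$. For the first series I simply expand $(x-s)^n$ by the binomial theorem; for the second I first use the splitting $x = (x-s) + s$ to write
$$x \cdot \frac{s^n(x-s)^n}{n!\,(n+1)!} \;=\; \frac{s^n(x-s)^{n+1}}{n!\,(n+1)!}\;+\;\frac{s^{n+1}(x-s)^n}{n!\,(n+1)!},$$
and then binomially expand the factors $(x-s)^{n+1}$ and $(x-s)^n$. In each of the three resulting double sums I reindex by setting $a := j$ (the exponent of $x$) and letting $b$ be the exponent of $s$, so that $n$ is determined by $a+b$. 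The ranges on the summation index $j$ translate directly into the three ranges stated in the theorem: the first series produces $a+b = 2n$ even with $0 \leq a \leq b$; the $s^n(x-s)^{n+1}$ piece produces $a+b = 2n+1$ odd with $0 \leq a \leq b+1$; and the $s^{n+1}(x-s)^n$ piece produces $a+b = 2n+1$ odd with $0 \leq a \leq b-1$.

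Matching binomial coefficients is then a direct substitution: for the even case, with $n = (a+b)/2$,
$$\binom{n}{a}\frac{a!\,b!}{n!\,n!} \;=\; \frac{b!}{((b-a)/2)!\,((a+b)/2)!} \;=\; \binom{b}{(a+b)/2},$$
and the sign $(-1)^{n-j}$ becomes $(-1)^{(b-a)/2}$. Entirely analogous identities handle the two odd cases, producing $\binom{b}{(b-a+1)/2}$ and $\binom{b}{(a+b+1)/2}$ with signs $(-1)^{(b-a+1)/2}$ and $(-1)^{(b-a-1)/2}$ respectively, exactly as in the theorem statement. The main obstacle is not conceptual but strictly bookkeeping: three reindexings, three range checks, and three binomial-coefficient identities must be tracked in parallel without error, and in the overlap region $0 \leq a \leq b-1$ with $a+b$ odd one must verify that both "odd" contributions persist as separate summands. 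Uniform convergence, which legitimizes all term-by-term rearrangements of the double and triple series, is supplied by the double factorials in the denominators, as the accompanying remark records.
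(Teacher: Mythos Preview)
Your proposal is correct and takes essentially the same approach as the paper: invoke Definition~\ref{c} so that ${x \brace s} = \mathrm{vol}(\Gamma(s,x-s))$, then apply Identity~\ref{iden} with the substitution $(x,y)\mapsto(s,x-s)$. The paper's proof is in fact a single sentence that leaves the binomial expansion for the triple sum and the $\tfrac{2}{n!\,n!}=\tfrac{2n+2}{n!\,(n+1)!}$ regrouping entirely to the reader, so your write-up simply supplies the bookkeeping that the paper elides.
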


\begin{proof}
Follows from Identity \ref{iden} using that   $\ {x \brace s} \  = \  \mathrm{vol}(\Gamma(s,x-s)). \ $
\end{proof}

The following result gives continuous analogues for a couple of  well-known properties of the binomial coefficients.

\begin{cor}\label{by}
{\em For $0 \leq s \leq x,$ we have that $ \ \displaystyle {x \brace 0}  =  {x \brace x} =   2  +  x, \ \  $ and  $\ \displaystyle {x \brace s}  =  {x \brace x-s}. $
}
\end{cor}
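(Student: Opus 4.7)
The plan is to read off both identities directly from the closed form for $\ {x \brace s}\ $ established in Theorem \ref{cn}, namely
\[
{x \brace s} \ = \ 2\sum_{n=0}^\infty \frac{s^{n}(x-s)^{n}}{n!\,n!} \ + \ x\sum_{n=0}^\infty \frac{s^{n}(x-s)^{n}}{n!\,(n+1)!}.
\]

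For the boundary values, I would first note that Definition \ref{c} extends $\ {x \brace s}\ $ to $\ s=0\ $ and $\ s=x\ $ by continuity, so it suffices to evaluate the closed form above at those values (uniform convergence of the series, as observed in the remark following Theorem \ref{cn}, lets one pass the limit inside). At $\ s=0\ $ every summand with $\ n\geq 1\ $ contains the factor $\ s^{n}=0, \ $ and only the $\ n=0\ $ term survives, contributing $\ 2\cdot 1 + x \cdot 1 = 2+x.\ $ The same argument at $\ s=x\ $ uses the factor $\ (x-s)^{n}\ $ to kill all terms with $\ n\geq 1, \ $ again leaving $\ 2+x.\ $

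For the symmetry $\ {x \brace s} = {x \brace x-s}, \ $ the product $\ s^{n}(x-s)^{n}\ $ appearing in every summand of the closed form is invariant under $\ s \mapsto x-s, \ $ so the formal identity is immediate. Alternatively, and perhaps more conceptually, one can argue geometrically: the linear involution on $\ \mathbb{R}^2\ $ swapping the two coordinate axes interchanges the step vectors $\ (1,0)\ $ and $\ (0,1), \ $ hence induces, for each pattern $\ c=(c_0,\ldots,c_n) \in D(n,2), \ $ a volume-preserving affine isomorphism $\ \Gamma^{c}(s,x-s) \longrightarrow \Gamma^{\bar c}(x-s,s), \ $ where $\ \bar c\ $ is obtained from $\ c\ $ by swapping $\ 1\leftrightarrow 2.\ $ Summing over patterns via Definition \ref{vol} gives $\ \mathrm{vol}(\Gamma(s,x-s))=\mathrm{vol}(\Gamma(x-s,s)), \ $ which is exactly the desired symmetry.

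There is no real obstacle here: once Theorem \ref{cn} is in hand, the corollary reduces to inspection of the closed form. The only small care needed is the appeal to uniform convergence when evaluating at the boundary of the domain, and a brief justification that the symmetry seen in the formula reflects the evident symmetry of the underlying moduli space of directed paths.
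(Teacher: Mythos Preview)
Your proposal is correct and matches the paper's approach: the paper states this result as an unproved corollary of Theorem \ref{cn}, and your argument---reading off the boundary values and the symmetry directly from the closed form $\ 2\sum_{n\geq 0} \frac{s^n(x-s)^n}{n!n!} + x\sum_{n\geq 0}\frac{s^n(x-s)^n}{n!(n+1)!}\ $---is exactly the intended one-line verification. Your additional geometric justification of the symmetry via the involution swapping the step vectors is a nice bonus that the paper does not spell out here, though it is implicit in the symmetry of $\mathrm{vol}(\Gamma(x,y))$ in Identity \ref{iden}.
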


\begin{rem}{\em  We have chosen to set the value of $ \ {x \brace 0} $ by continuity. The increment of  the weight of the unique directed path joining $\ (0,0) \ $ and $\ (x,0)\ $ from $\ 1 \ $ to $\ 2+x \ $ is reminiscent of the process by which a particle acquires an higher effective mass -- compared to its bare mass -- by being surrounded by other massive particles.
}
\end{rem}

\begin{cor}{\em For $\ s \geq 0, \ $ we have that:
$$\ {1 +s \brace s} \ \ = \ \ 3 \ + \ \sum_{n=1}^\infty \left( n^2 + 3n +3\right)\frac{s^{n}}{n!(n+1)!} .$$
}
\end{cor}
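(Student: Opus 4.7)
The plan is to specialize Theorem \ref{cn} at $x = 1 + s$ and then match the resulting series term-by-term with the claimed expression. Since $x - s = 1$ in this case, the factor $(x-s)^n$ disappears, which is what makes this specialization pleasant.

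First I would write down the closed form from Theorem \ref{cn},
$${x \brace s} \ = \ \sum_{n=0}^{\infty} (x + 2n + 2)\frac{s^n (x-s)^n}{n!(n+1)!},$$
and substitute $x = 1+s$ to get
$${1+s \brace s} \ = \ \sum_{n=0}^{\infty} (2n + 3 + s)\frac{s^n}{n!(n+1)!}.$$

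Next I would split this into two series, one collecting the $2n+3$ terms and one collecting the $s$ term. The $n=0$ contribution of the first series is exactly the constant $3$ appearing in the statement, so I peel it off and write
$${1+s \brace s} \ = \ 3 \ + \ \sum_{n=1}^\infty (2n+3)\frac{s^n}{n!(n+1)!} \ + \ \sum_{n=0}^{\infty} \frac{s^{n+1}}{n!(n+1)!}.$$
Then I reindex the last series by $m = n+1$, obtaining $\sum_{m=1}^\infty s^m/((m-1)!\, m!)$, and rewrite its coefficient using the identity
$$\frac{1}{(m-1)!\, m!} \ = \ \frac{m(m+1)}{m!\, (m+1)!},$$
so that both remaining series have the common denominator $n!(n+1)!$.

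Finally I combine: the coefficient of $s^n/(n!(n+1)!)$ for $n \geq 1$ becomes $(2n+3) + n(n+1) = n^2 + 3n + 3$, matching the right-hand side of the corollary. There is no real obstacle here; the only thing to watch is the bookkeeping when shifting the index in the $s \cdot s^n$ term and converting the factorial denominator, but both are mechanical.
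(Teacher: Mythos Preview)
Your proof is correct and is exactly the intended argument: the paper states this result as an immediate corollary of Theorem \ref{cn} and gives no separate proof, so specializing the closed form $\sum_{n\ge 0}(x+2n+2)\dfrac{s^n(x-s)^n}{n!(n+1)!}$ at $x=1+s$ and reindexing is precisely what is meant.
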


Our next results shows that the binomial coefficients are an eigenfunction, with eigenvalue 1, of an hyperbolic partial differential
equation.

\begin{thm}\label{pbc11}
{\em The continuous binomial coefficients $\ {x \brace s} \ $ satisfy the following partial differential equation:
$$\Big( \frac{\partial^2}{\partial x^2}  \ + \  \frac{\partial^2}{\partial x \partial s}  \Big){x \brace s} \ = \ {x \brace s}.$$
}
\end{thm}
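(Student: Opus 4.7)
The plan is to simplify the PDE by a linear change of variables and then verify the resulting equation termwise using the series from Identity \ref{iden}. Concretely, I set $u = s$ and $v = x-s$, and regard the continuous binomial as a function $G(u,v) = {u+v \brace u}$ of the new variables. Since $\partial_x = \partial_v$ and $\partial_s = \partial_u - \partial_v$ in these coordinates, one computes
\[
\partial_x^2 + \partial_x \partial_s \;=\; \partial_v^2 + \partial_v(\partial_u - \partial_v) \;=\; \partial_u \partial_v,
\]
so the theorem reduces to showing $\partial_u \partial_v G = G$.

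By Identity \ref{iden} we have $G(u,v) = 2A(u,v) + (u+v)B(u,v)$, where
\[
A(u,v) = \sum_{n=0}^\infty \frac{(uv)^n}{(n!)^2}, \qquad B(u,v) = \sum_{n=0}^\infty \frac{(uv)^n}{n!(n+1)!}.
\]
These are (up to a factor of $\sqrt{uv}$) the modified Bessel functions $I_0(2\sqrt{uv})$ and $I_1(2\sqrt{uv})$, though the argument will not require this interpretation. The uniform convergence noted in the remark preceding Theorem \ref{cn} justifies termwise differentiation throughout.

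A direct term-by-term computation yields the elementary identities $\partial_v A = uB$, $\partial_u A = vB$, and the algebraic identity
\[
A(u,v) - B(u,v) \;=\; uv\sum_{n=0}^\infty \frac{(uv)^n}{n!(n+2)!},
\]
which combine to give the three key differential relations $\partial_u \partial_v A = A$, $\partial_u B = (A-B)/u$, and $\partial_v B = (A-B)/v$. Applying Leibniz to expand $\partial_u \partial_v G = 2\,\partial_u \partial_v A + \partial_u \partial_v[(u+v)B]$ and substituting these relations, the three $(A-B)$-terms $1/u$, $1/v$, and $-(u+v)/(uv)$ that arise from $\partial_u \partial_v[(u+v)B]$ sum to zero, leaving $\partial_u \partial_v G = 2A + (u+v)B = G$.

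The one delicate step is the Leibniz expansion at the end: one has to handle $\partial_u\bigl[(u+v)(A-B)/v\bigr]$ carefully, observe that the $1/u$, $1/v$, and $(u+v)/(uv)$ singularities cancel identically, and keep track of the $2u\,\partial_u B$ term coming from $\partial_u(2uB)$. None of the individual moves is hard, but the bookkeeping is easy to mishandle; everything else — the change of variables, termwise differentiation, and derivation of the three differential identities — is completely routine.
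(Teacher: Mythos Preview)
Your proof is correct and follows essentially the same route as the paper: both reduce the claim, via the change of variables $(s,x)\mapsto(u,v)=(s,x-s)$, to the mixed-partial equation $\partial_u\partial_v\,\mathrm{vol}(\Gamma(u,v))=\mathrm{vol}(\Gamma(u,v))$. The only difference is that the paper cites this last PDE from \cite{cd}, whereas you prove it directly from the series in Identity~\ref{iden}; your version is therefore longer but self-contained.
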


\begin{proof}

We know from \cite{cd} that the function $\ \mathrm{vol}(\Gamma)=
\mathrm{vol}(\Gamma(x,y)), \ $ given explicitly in the proof of
Theorem \ref{cn}, \ satisfies the following  partial differential
 equation:
$$\frac{\partial^2 \mathrm{vol}(\Gamma)}{\partial x \partial y}  \ = \ \mathrm{vol}(\Gamma).$$
Therefore we have that:
$$\Big( \frac{\partial^2}{\partial x^2}  \  + \  \frac{\partial^2}{\partial x \partial s}  \Big){x \brace s}  \ = \
\frac{\partial^2 \mathrm{vol}(\Gamma(s,x-s)) }{\partial x^2}  \ \ + \ \  \frac{\partial^2 \mathrm{vol}(\Gamma(s,x-s)) }{\partial x \partial s} \ \ =  $$
$$\frac{\partial^2 \mathrm{vol}(\Gamma)}{\partial y^2}(s,x-s) \ \ + \ \
\frac{\partial}{\partial x}\Big[\frac{\partial \mathrm{vol}(\Gamma)}{\partial x }(s,x-s) \ \ - \ \
\frac{\partial \mathrm{vol}(\Gamma)}{\partial y }(s,x-s) \big] \ \ = $$
$$\frac{\partial^2 \mathrm{vol}(\Gamma)}{\partial y^2}(s,x-s) \ \ + \ \
\frac{\partial^2 \mathrm{vol}(\Gamma)}{\partial x \partial y}(s,x-s)\ \ - \ \
\frac{\partial^2 \mathrm{vol}(\Gamma)}{\partial y^2 }(s,x-s) \ \ = $$
$$\mathrm{vol}(\Gamma(s,x-s))  \ = \  {x \brace s}.$$
\end{proof}

The following result, due to  Tom Koornwinder, gives an explicit formula  for the continuous binomial coefficients in terms
of the modified Bessel functions $\ \mathrm{I}_0 \ $ and $\ \mathrm{I}_1.$

\begin{thm} {\em  For $\ 0 \leq s \leq x, \ $ we have that
$${ x \brace s } \ \ = \ \  2 \mathrm{I}_0(2 s^{1/2}(x-s)^{1/2})\ + \ \frac{x}{s^{1/2}(x-s)^{1/2}}  \mathrm{I}_1(2 s^{1/2}(x-s)^{1/2}).$$
}
\end{thm}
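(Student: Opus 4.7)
The plan is to reduce the claimed identity to a routine series comparison against the closed form for $\ {x \brace s} \ $ already established in Theorem \ref{cn}. First I would recall the power series expansions of the modified Bessel functions,
$$\mathrm{I}_0(z) \ = \ \sum_{n=0}^{\infty}\frac{(z/2)^{2n}}{(n!)^2}, \qquad \mathrm{I}_1(z) \ = \ \sum_{n=0}^{\infty}\frac{(z/2)^{2n+1}}{n!(n+1)!},$$
which are the standard entire series one obtains as the even/odd parts of $e^{z\cos\theta}$ or from the Frobenius solution to the modified Bessel equation.

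Next I would substitute $\ z \ = \ 2\,s^{1/2}(x-s)^{1/2} \ $ into these expansions. Because every power in the $\ \mathrm{I}_0 \ $ series is even, the half-integer exponents cancel and one gets
$$\mathrm{I}_0\bigl(2\,s^{1/2}(x-s)^{1/2}\bigr) \ = \ \sum_{n=0}^{\infty}\frac{s^{n}(x-s)^{n}}{(n!)^{2}}.$$
The $\ \mathrm{I}_1 \ $ series contributes only odd powers, producing a factor $\ s^{1/2}(x-s)^{1/2} \ $ which is exactly what the prefactor $\ x/(s^{1/2}(x-s)^{1/2}) \ $ is designed to absorb:
$$\frac{x}{s^{1/2}(x-s)^{1/2}}\,\mathrm{I}_1\bigl(2\,s^{1/2}(x-s)^{1/2}\bigr) \ = \ x\sum_{n=0}^{\infty}\frac{s^{n}(x-s)^{n}}{n!(n+1)!}.$$

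Finally I would invoke the explicit formula from Theorem \ref{cn},
$${x \brace s} \ = \ 2\sum_{n=0}^{\infty}\frac{s^{n}(x-s)^{n}}{n!\,n!}\ + \ x\sum_{n=0}^{\infty}\frac{s^{n}(x-s)^{n}}{n!(n+1)!},$$
and observe that the right-hand side is term-by-term equal to $\ 2\,\mathrm{I}_0(2\,s^{1/2}(x-s)^{1/2}) \ $ plus $\ \tfrac{x}{s^{1/2}(x-s)^{1/2}}\,\mathrm{I}_1(2\,s^{1/2}(x-s)^{1/2}), \ $ which is the claim. The only mildly delicate point is bookkeeping the fractional powers so that the prefactor in front of $\ \mathrm{I}_1 \ $ indeed renormalizes the odd-power Bessel series into the integer-power series appearing in Theorem \ref{cn}; since both series converge absolutely on $\ 0 \le s \le x \ $ (each summand carries a $\ 1/(n!)^2 \ $ type damping), the identity extends by continuity to the boundary values $\ s=0 \ $ and $\ s=x, \ $ matching the values in Corollary \ref{by} via $\ \mathrm{I}_0(0)=1,\ \mathrm{I}_1(z)/z \to 1/2 \ $ as $\ z \to 0. \ $
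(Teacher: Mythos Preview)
Your proposal is correct and follows essentially the same route as the paper: the paper's proof simply invokes Identity \ref{iden} (equivalently, the series in Theorem \ref{cn}) and matches it against the standard power-series expansions of $\mathrm{I}_0$ and $\mathrm{I}_1$, exactly as you do. Your extra remark on the boundary values via $\mathrm{I}_0(0)=1$ and $\mathrm{I}_1(z)/z \to 1/2$ is a nice touch that the paper omits.
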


\begin{proof}
The result follows from Identity \ref{iden} and the defining expressions
$$ \mathrm{I}_0(z) \ = \ \sum_{k=0}^{\infty}\frac{(\frac{1}{4}z^2)^{k}}{k!k!}  \ \ \ \ \ \ \mbox{and} \ \ \ \ \ \
\mathrm{I}_1(z) \ = \ \frac{z}{2}\sum_{k=0}^{\infty}\frac{(\frac{1}{4}z^2)^{k}}{k!(k+1)!}$$
for the the modified Bessel functions $\ \mathrm{I}_0 \ $ and $\ \mathrm{I}_1.$

\end{proof}

Fix $\ 0 < p < 1 \ $ and $\ x > 0. \ $ A continuous analogue of the discrete binomial distribution may be defined via the density function
on the interval $\ [0,x] \ $ given by
$$\frac{1}{b_p(x)}{ x \brace s }p^s(1-p)^{x-s} \ \ \ \ \ \mbox{where} \ \ \ \ \ b_p(x)=\int_0^x{ x \brace s }p^s(1-p)^{x-s}ds$$
which, intuitively, measures the probability of the motion of a particle in such that:
\begin{itemize}
  \item The particle starts at $\ (0,0) \ $ and moves with speed $\ 1 \ $ for $\ x \ $ units of times.
  \item The particle moves with probability $\ p \ $ in the horizontal direction, and  with probability $\ 1-p \ $ in the vertical direction.
  \item The particle ends up at the point $\ (s,x-s).$
\end{itemize}

For $\ p = \frac{1}{2} \ $ the normalization constant, up to a factor of $\ 2^{-x}, \ $ is given by
$$ \int_0^x{ x \brace s }ds,$$
and measures the number of paths starting at the origin and traveling for $\ x \ $ units of time either in the horizontal or in the vertical direction. Note that $\  \int_0^x{ x \brace s }ds \ $ plays, for the continuous binomial coefficients, the  role that $\ 2^n \ $ plays for the binomial coefficients.

Below we are going to use the following identity, valid for $\ n,m \in \mathbb{N}, \ $ involving the classical beta $\ \mathrm{B}  \ $ and gamma $\ \Gamma \ $ functions:
$$\int_0^1s^n(1-s)^mds\ = \ \mathrm{B}(n+1,m+1)\ = \ \frac{\Gamma(n+1)\Gamma(m+1)}{\Gamma(n+m+2)}
\ = \ \frac{n! m!}{(n+m+1)!}.$$

Also we are going to use the falling factorials $\ (a)_n \ = \ a(a-1)\cdots (a-n+1)\ $ for $\ a \geq n. \ $ Note that the notation $\ a^{\underline{n}} \ $ for the falling factorial is also
quite common.

\begin{thm}\label{b}
 {\em  \
\begin{enumerate}
  \item For $\ x \in \mathbb{R}_{\geq 0} \ $ we have that
$$\int_0^x{ x \brace s }  ds= 2(e^x-1) \ \ \ \ \ \mbox{and \ thus} \ \ \ \ \ b_{\frac{1}{2}}(x)  =  (e^x-1)2^{1-x}.$$

\item The following identities hold:
$$ e \ = \  1 \ + \ \frac{1}{2}\int_{0}^{1} { 1 \brace s } ds \ \ \ \  \ \ \ \  \mbox{and} \ \ \ \ \ \ \ \
\lim_{n \rightarrow \infty} \bigg(1  +   \frac{x}{n}\bigg)^n \ = \ 1 \ + \ \frac{1}{2}\int_{0}^{x} { x \brace s } ds .$$
  \item For $\ 0<p<1 \ $ we have that $\ b_p(x) \ $ is given by
  $$2(1-p)^x\sum_{k,n=0}^\infty \mathrm{ln}^k(\frac{p}{1-p}){n+k \choose k}\bigg[\frac{x^{2n+k+1}}{(2n+k+1)!}\  + \   \frac{x^{2n+k+2}}{(2n+2)(2n+k+1)!}\bigg].$$

  \item  For $\ 0<p<1, \ $the function $\ b_p(x) \ $ can  be written as
 $$\pi^{\frac{1}{2}}(p(1-p))^{\frac{x}{2}}\sum_{n=0}^\infty \frac{(x+2n+2)}{(n+1)!} \big(\frac{x}{\mathrm{ln}(\frac{p}{1-p})}\big)^{n+\frac{1}{2}}
I_{n+\frac{1}{2}}\big(\frac{x}{2}\mathrm{ln}(\frac{p}{1-p})\big).$$

\end{enumerate}
}
\end{thm}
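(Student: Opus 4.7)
The plan is to reduce the four parts to Theorem~\ref{cn}, the beta integral $\int_0^1 u^n(1-u)^n\,du = (n!)^2/(2n+1)!$, and, for part (4), the Poisson integral representation of the half-integer modified Bessel functions.

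For part (1), I would start from the compact expression
$${x \brace s} \ = \ \sum_{n=0}^\infty (x+2n+2)\frac{s^n(x-s)^n}{n!(n+1)!}$$
supplied by Theorem~\ref{cn} and integrate termwise. The beta identity gives $\int_0^x s^n(x-s)^n\,ds = (n!)^2 x^{2n+1}/(2n+1)!$, and the integral splits as
$$2\sum_{n\geq 0} \frac{x^{2n+1}}{(2n+1)!} \ + \ \sum_{n\geq 0} \frac{x^{2n+2}}{(n+1)(2n+1)!}.$$
The first piece is $2\sinh(x)$; the elementary identity $2/(2n+2)! = 1/((n+1)(2n+1)!)$ turns the second into $2(\cosh(x)-1)$, summing to $2(e^x-1)$. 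The formula for $b_{1/2}(x)$ then follows by pulling $(1/2)^x$ out of the defining integral. Part (2) is an immediate corollary: $1 + \frac{1}{2}\int_0^x {x \brace s}\,ds = e^x$; specializing $x=1$ and invoking $e^x = \lim_n(1+x/n)^n$ produces both identities.

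For part (3), I would write $p^s(1-p)^{x-s} = (1-p)^x e^{sL}$ with $L = \ln(p/(1-p))$, expand the exponential as a power series in $sL$, and interchange sum and integral. Using the expanded form of ${x \brace s}$ from Theorem~\ref{cn} together with $\int_0^x s^{n+k}(x-s)^n\,ds = (n+k)!\,n!\,x^{2n+k+1}/(2n+k+1)!$, the $k!$ from the exponential series cancels via $(n+k)!/n! = k!\binom{n+k}{k}$, producing the stated double sum. The delicate point is aligning the two contributions to ${x \brace s}$ so that the factor $2(n+1)$ becomes the $(2n+2)$ appearing in the denominator.

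For part (4), the approach is instead to keep ${x \brace s}$ in its compact form and extract one Bessel function per index $n$. The substitution $s = xu$ turns the inner integral into $(1-p)^x x^{2n+1}\int_0^1 u^n(1-u)^n e^{xuL}\,du$, and a further substitution $u = (1+v)/2$ converts this into $\int_{-1}^{1}(1-v^2)^n e^{wv}\,dv$ with $w = xL/2$. I would then invoke the Poisson integral representation
$$I_{n+\frac{1}{2}}(w) \ = \ \frac{(w/2)^{n+\frac{1}{2}}}{\sqrt{\pi}\,n!}\int_{-1}^{1}(1-v^2)^n e^{wv}\,dv,$$
solve for the integral, and use $(1-p)^x e^{xL/2} = (p(1-p))^{x/2}$ to combine factors. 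The main obstacle is the arithmetic of powers of $2$, $x$ and $L$ required to land on the exponent $n+\frac{1}{2}$ of $x/L$ and on the single $\sqrt{\pi}$; once this is done, the resulting series is precisely the claimed one.
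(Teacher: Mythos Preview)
Your proposal is correct and follows essentially the same route as the paper: termwise integration of the series from Theorem~\ref{cn} using the beta integral for parts (1)--(3), and the Poisson integral representation of $I_{n+\frac{1}{2}}$ after the substitution $s=\frac{x}{2}(1+v)$ for part (4). The only cosmetic differences are that the paper uses the two-sum form of ${x \brace s}$ rather than the compact one in part (1), and performs the change of variables in part (4) in a single step rather than two.
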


\begin{proof} Item 2 follows directly from item 1, which is shown as follows:
$$\int_0^x{ x \brace s } ds\ = \ 2\sum_{n=0}^\infty
\frac{1}{n!n!}\int_0^xs^{n}(x-s)^nds \ + \ \sum_{n=0}^\infty\frac{x}{(n+1)!n!}
\int_0^x s^{n}(x-s)^{n} ds \ = $$
$$2\sum_{n=0}^\infty
\frac{x^{2n+1}}{n!n!}B(n+1,n+1) \ + \ \sum_{n=0}^\infty\frac{x^{2n+2}}{(n+1)!n!}
B(n+1,n+1) \ =$$
$$2\sum_{n=0}^\infty
\frac{x^{2n+1}}{(2n+1)!} \ + \ 2\sum_{n=0}^\infty\frac{x^{2n+2}}{(2n+2)!}
 \ = \ 2\mathrm{sinh}(x) + 2\mathrm{cosh}(x)-2\  = $$
$$e^x - e^{-x} +e^x + e^{-x} -2 \ = \ 2(e^x -1).$$
The proof of  item 3 is omitted as it can be deduced just as item 2 of Theorem \ref{zz} below with $\ l=0. \ $ Item 4 follows by termwise integration making the change of variable $\ s  =  \frac{x}{2}(t+1), \ $ using the integral representation for the modified Bessel functions given by the
identity $$ \int_{-1}^{1}(1-t^2)^{v-\frac{1}{2}}e^{zt}dt \ = \ \pi^{\frac{1}{2}}\Gamma(v+\frac{1}{2})(\frac{z}{2})^{-v}I_v(z) .$$ Explicitly, we have that
$$ b_p(x)\ =\ \int_0^x{ x \brace s }p^s(1-p)^{x-s}ds\ = \ (1-p)^x\int_0^x{ x \brace s }e^{\mathrm{ln}(\frac{p}{1-p})s}ds\ \ = $$
$$ (1-p)^x\sum_{n=0}^\infty \frac{x+2n+2}{n!(n+1)!} \int_0^xs^{n}(x-s)^ne^{\mathrm{ln}(\frac{p}{1-p})s}ds\  \ = $$
$$(p(1-p))^{\frac{x}{2}}\sum_{n=0}^\infty \frac{x+2n+2}{n!(n+1)!}(\frac{x}{2})^{2n+1}
 \int_{-1}^{1}(1-t^2)^n e^{\frac{x}{2}\mathrm{ln}(\frac{p}{1-p})t}dt \ \ = $$
$$ \pi^{\frac{1}{2}}(p(1-p))^{\frac{x}{2}}\sum_{n=0}^\infty \frac{x+2n+2}{(n+1)!} \big(\frac{x}{\mathrm{ln}(\frac{p}{1-p})}\big)^{n+\frac{1}{2}}
I_{n+\frac{1}{2}}(\frac{x}{2}\mathrm{ln}(\frac{p}{1-p})). $$
\end{proof}

We define the centered continuous binomial distribution, for $\ p=\frac{1}{2},\ $   via its density function $\ d_x \ $ which has support in the interval $\ [-\frac{x}{2},\frac{x}{2}] \ $ where it is given by $$d_x(s)=  \frac{1}{b(x)}{x \brace  \frac{x}{2} + s}.$$

\begin{figure}[t]
\centering
\includegraphics[scale=0.6]{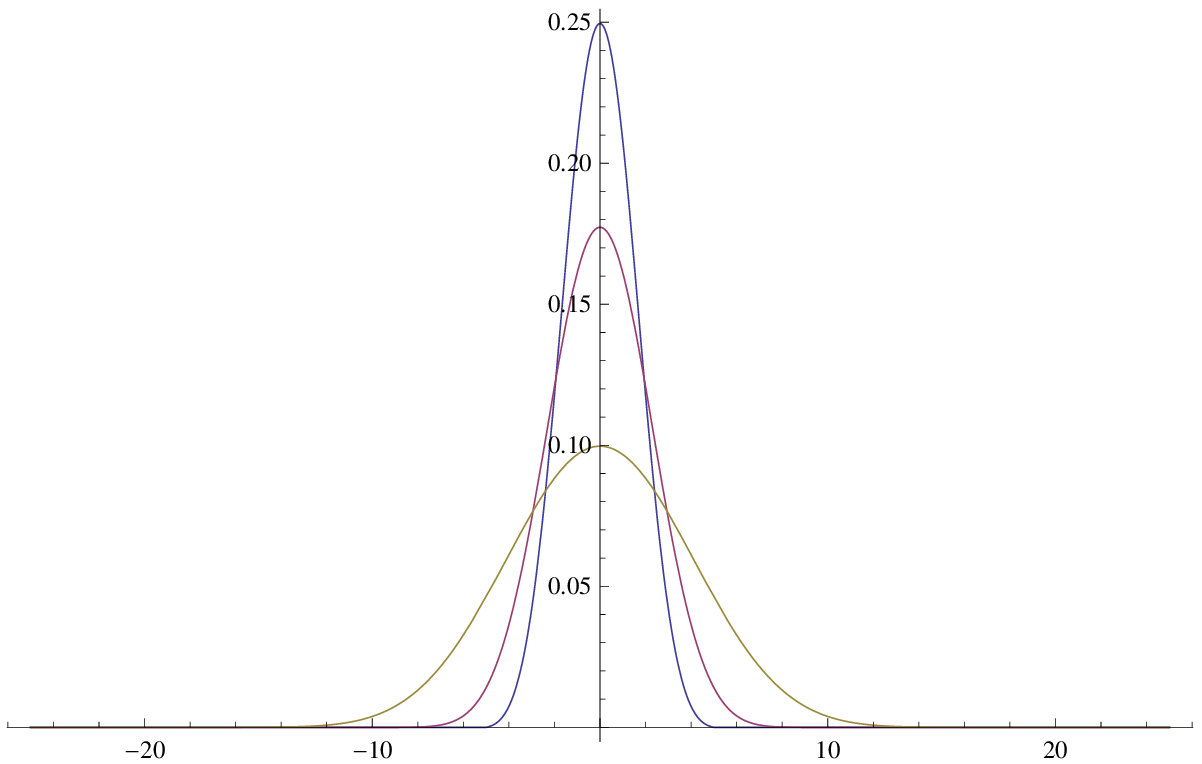}
\captionsetup{labelformat=empty} \caption{Figure 2: \ Continuous binomial density $\ d_x \ $ for $\ x\ =\ 10,\ 20,\ 50.$ }
 \label{fig1}
\end{figure}

\begin{thm}\label{zz}
{\em \
\begin{enumerate}
  \item The moments $\ E_{\frac{1}{2}}(s^l), \ $ for $\ l \geq 1, \ $ of the continuous binomial distribution, for $\ p=\frac{1}{2}, \ $ are given  by
  $$  E_{\frac{1}{2}}(s^l)  \ \  = \ \ \frac{1}{b_{\frac{1}{2}}(x)}\sum_{n=0}^\infty (n+l)_l \frac{x^{2n+l+1}}{(2n+l+l)! } \ \  +\  \
\frac{1}{b_{\frac{1}{2}}(x)}\sum_{n=0}^\infty (n+l)_{(l-1)}\frac{x^{2n+l+2}}{(2n+l+1)!} .$$
  \item The moments $\ E_{p}(s^l), \ $ for  $\ l \geq 1, \ $ of the continuous binomial distribution, for  $\ 0 < p < 1, \ $ are given by
\begin{footnotesize}
 $$\frac{2(1-p)^x}{b_p(x)}\sum_{k,n=0}^\infty \mathrm{ln}^k(\frac{p}{1-p})\binom{n+k+l}{n}(k+l)_l\bigg(\frac{x^{2n+k+l+1}}{(2n+k+l+1)!} \   + \
  \frac{x^{2n+k+l+2}}{(2n+2)(2n+k+l+1)!} \bigg).$$
 \end{footnotesize}
\end{enumerate}

}
\end{thm}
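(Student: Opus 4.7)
The plan is to expand the integrand using the explicit power series for $\ {x \brace s}\ $ from Theorem \ref{cn} and evaluate each resulting monomial integral via the Beta function identity. Starting from the definition
$$E_p(s^l) \ = \ \frac{1}{b_p(x)}\int_0^x s^l \, {x \brace s} \, p^s (1-p)^{x-s} \, ds,$$
I would substitute
$${x \brace s} \ = \ 2 \sum_{n=0}^\infty \frac{s^n (x-s)^n}{n!\,n!} \ + \ x \sum_{n=0}^\infty \frac{s^n (x-s)^n}{n!\,(n+1)!},$$
justify the swap of integration and summation by the uniform convergence observed in the remark following Identity \ref{iden}, and reduce everything to integrals of the form $\int_0^x s^{a}(x-s)^{b} \, ds = x^{a+b+1} \frac{a!\,b!}{(a+b+1)!}$, already used for the $\ l=0\ $ computations earlier in the section.

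For item (1), the factor $\ p^s (1-p)^{x-s}\ $ collapses to the constant $\ 2^{-x}\ $ and pulls out of the integral. Applying the Beta integral with $\ a = n+l,\ b = n\ $ to the two halves of $\ {x \brace s}\ $ produces the prefactors $(n+l)!/n! = (n+l)_l$ from the first sum and $(n+l)!/(n+1)! = (n+l)_{l-1}$ from the second, giving the two claimed series; the remaining scalar $2^{1-x}$ is then absorbed into the normalization $1/b_{1/2}(x)$ via the explicit value $b_{1/2}(x) = (e^x-1)2^{1-x}$ from Theorem \ref{b}.

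For item (2), I would handle the exponential weight by writing $\ p^s (1-p)^{x-s} = (1-p)^x \exp(\ln(p/(1-p)) \, s)\ $ and expanding the exponential as $\sum_k \tfrac{\ln^k(p/(1-p))}{k!} s^k$. Plugging this in introduces an extra power $\ s^k\ $ under the integral, so the relevant Beta integral becomes $\int_0^x s^{n+l+k}(x-s)^n ds = x^{2n+l+k+1} \tfrac{(n+l+k)!\,n!}{(2n+l+k+1)!}$. Combining with the $\ \tfrac{1}{n!\,n!}\ $ and $\ \tfrac{x}{n!\,(n+1)!}\ $ prefactors from $\ {x \brace s}\ $ and the $\ \tfrac{1}{k!}\ $ from the exponential, the rational coefficient simplifies to $\tfrac{(n+l+k)!}{n!\,k!}$ (respectively the same divided by $\ n+1$), and the elementary identity
$$\frac{(n+l+k)!}{n!\,k!} \ = \ \binom{n+k+l}{n} (k+l)_l$$
(which follows from $(k+l)_l = (k+l)!/k!$ and the definition of the binomial coefficient) reshapes it into the form stated in item (2); the two contributions coming from the $\ 2\sum\ $ and $\ x\sum\ $ halves of $\ {x \brace s}\ $ become respectively the two bracketed summands. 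The main obstacle is purely bookkeeping: carefully aligning the indices $\ l,n,k$, verifying absolute convergence of the resulting double series (inherited from the double-factorial decay, exactly as in the uniform-convergence remark after Identity \ref{iden}) to justify the reordering, and checking consistency with item (1), which corresponds to $\ \ln(p/(1-p))=0\ $ so that only $\ k=0\ $ survives and $\ \binom{n+l}{n}(l)_l = (n+l)_l$.
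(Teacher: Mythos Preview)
Your proposal is correct and follows essentially the same route as the paper: substitute the series for ${x \brace s}$, write $p^s(1-p)^{x-s}=(1-p)^x e^{\ln(p/(1-p))\,s}$ and expand the exponential, then evaluate each term via the Beta integral and simplify the coefficients. If anything, your write-up is more careful than the paper's (which omits item~1 entirely as ``the same pattern'' and does not display the identity $\frac{(n+k+l)!}{n!\,k!}=\binom{n+k+l}{n}(k+l)_l$ or the consistency check at $k=0$).
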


\begin{proof}
Item 1 follows the same pattern as  the proof of Theorem \ref{b}. We show item 2:

$$b_p(x)E_p(s^l)\ = \ \int_0^xs^l{ x \brace s }p^s(1-p)^{x-s} ds\ = $$
$$2\sum_{n=0}^\infty \int_0^xs^l\frac{s^{n}(x-s)^n}{k!n!n!}p^s (1-p)^{x-s}ds\ \ +\ \ \sum_{n=0}^\infty x\int_0^x
s^l\frac{s^{n}(x-s)^{n}}{k!(n+1)!n!} p^s (1-p)^{x-s}ds\ = $$
$$2(1-p)^x\sum_{k,n=0}^\infty \mathrm{ln}^k(\frac{p}{1-p})\int_0^x\frac{s^{n+k+l}(x-s)^n}{k!n!n!}ds\ \ \ \ + \ $$
$$ (1-p)^x\sum_{k, n=0}^\infty \mathrm{ln}^k(\frac{p}{1-p}) x\int_0^x \frac{s^{n+k+l}(x-s)^{n}}{k!(n+1)!n!}  ds\ \ \  = $$
$$2(1-p)^x\sum_{k,n=0}^\infty \mathrm{ln}^k(\frac{p}{1-p})\frac{(n+k+l)!}{k!n!}\frac{x^{2n+k+l+1}}{(2n+k+l+1)!}\ \ \  + \ $$
$$ (1-p)^x\sum_{k, n=0}^\infty \mathrm{ln}^k(\frac{p}{1-p})\frac{(n+k+l)!}{k!(n+1)!} \frac{x^{2n+k+l+2}}{(2n+k+l+1)!} . $$
\end{proof}

\begin{prop}{\em \
\begin{enumerate}
\item The odd moments of $\ d_x \ $ vanish: $\ E(s^{2k+1})  =  0. $

\item The moment $\ E(s^{2k}) \ $ of $\ d_x \ $ is given by
$$E(s^{2k}) \ \ = \ \ \frac{1}{b_{\frac{1}{2}}(x)}\sum_{l=0}^{2k}\binom{2k}{l}(-\frac{x}{2})^{2k-l}\int_{0}^{x}t^{l}{x \brace  t}dt,$$
where $\ \displaystyle \int_{0}^{x}t^{l}{x \brace  t}dt \ $ has been given explicitly in Theorem \ref{zz}.

\item Let $\ \delta\ $ be the Dirac's delta function centered
at $\ 0. \ $ We have that
$$\displaystyle \lim_{x \rightarrow 0} d_x \  =  \ \delta \ \ \ \ \ \mbox{in the weak topology.}$$

\end{enumerate}
}
\end{prop}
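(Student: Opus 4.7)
My plan is to establish the three items in turn, since each reduces to either a direct manipulation of the defining integral for $d_x$ or to a standard approximation-of-identity argument.

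For item (1), I would exploit the symmetry of the continuous binomial coefficient already recorded in Corollary \ref{by}, namely ${x \brace r} = {x \brace x-r}$. Applied with $r = \frac{x}{2}+s$ this reads ${x \brace \frac{x}{2}+s} = {x \brace \frac{x}{2}-s}$, so the density $d_x$ is an even function on its symmetric support $[-\frac{x}{2},\frac{x}{2}]$. The integrand $s^{2k+1}d_x(s)$ is then odd on a symmetric interval, and its integral vanishes.

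For item (2), I would substitute the definition of $d_x$ into $E(s^{2k}) = \int_{-x/2}^{x/2} s^{2k} d_x(s)\,ds$ and change variables $t = \frac{x}{2}+s$, obtaining
\[
E(s^{2k}) \;=\; \frac{1}{b_{1/2}(x)} \int_{0}^{x} \left(t - \tfrac{x}{2}\right)^{2k} {x \brace t}\, dt.
\]
Expanding $(t-\frac{x}{2})^{2k}$ by the binomial theorem and interchanging the (finite) sum with the integral yields precisely the claimed formula, since the remaining integrals $\int_{0}^{x} t^{l} {x \brace t}\, dt$ are the quantities whose explicit power series were computed in Theorem \ref{zz} (item 1 with $p = \frac{1}{2}$, up to the constant weight $2^{-x}$ absorbed in the normalization).

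For item (3), I would use a standard argument: the density $d_x$ is a probability measure on $[-\frac{x}{2},\frac{x}{2}]$, a support that collapses to $\{0\}$ as $x \to 0$. For any test function $\varphi$ continuous at $0$, writing
\[
\left| \int_{-x/2}^{x/2} \varphi(s)\, d_x(s)\, ds - \varphi(0) \right| \;=\; \left| \int_{-x/2}^{x/2} \bigl(\varphi(s) - \varphi(0)\bigr) d_x(s)\, ds \right| \;\leq\; \sup_{|s| \leq x/2} |\varphi(s) - \varphi(0)|,
\]
and sending $x \to 0$ forces the right-hand side to zero by continuity of $\varphi$ at the origin. This is exactly weak convergence $d_x \to \delta$.

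The only real point of friction is bookkeeping around the normalization constant: one has to fix a convention ($b(x) = \int_0^x {x \brace t}\,dt = 2(e^x-1)$ for a true probability density, versus the $b_{1/2}(x) = (e^x-1)2^{1-x}$ of Theorem \ref{b} that carries an implicit factor $(1/2)^x$) and then trace it consistently through items (2) and (3). Once that bookkeeping is settled, no substantive obstacle remains; each item is a short consequence of results already in hand.
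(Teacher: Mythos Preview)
Your proposal is correct and follows essentially the same route as the paper: evenness of $d_x$ via Corollary~\ref{by} for item~(1), the substitution $t=\frac{x}{2}+s$ followed by binomial expansion for item~(2), and the standard shrinking-support approximation-of-identity estimate for item~(3). Your remark about the normalization bookkeeping ($b(x)$ versus $b_{1/2}(x)$) is apt, since the paper itself is somewhat loose in switching between them, but as you say this is purely notational.
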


\begin{figure}[t]
\centering
\includegraphics[scale=0.8]{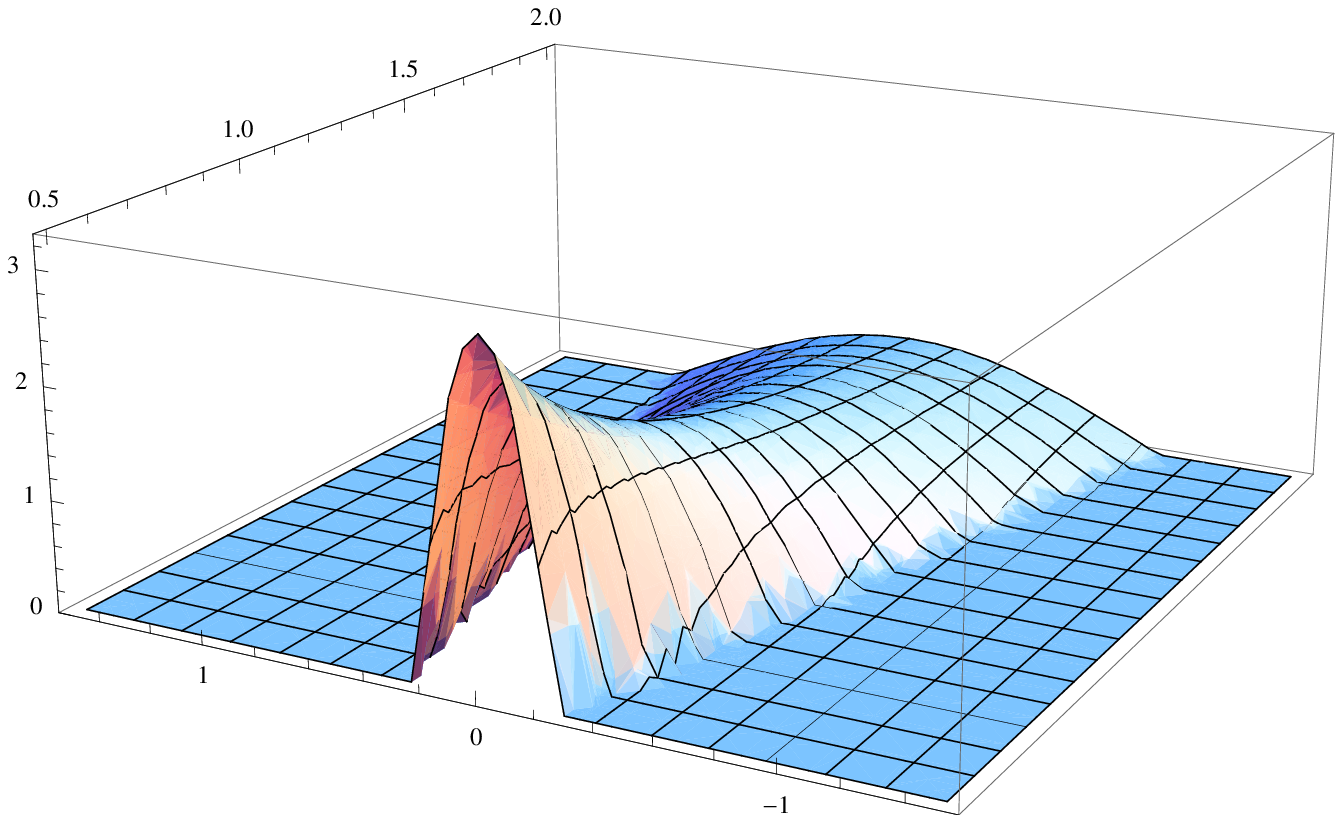}
\captionsetup{labelformat=empty} \caption{Figure 3: \
Continuous binomial density $\ d_x(s)\ $ for $\ x \in [0.7,2] \ $ and $\ s \in [-1.5,1.5].$}
 \label{f}
\end{figure}

\begin{proof}To show  property 1 it is enough to check that $\ d_x \ $ is an even function:
$$d_x(-s) \ = \ \frac{1}{b(x)}{x \brace  \frac{x}{2}-s} \ = \ \frac{1}{b(x)}{x \brace x - (\frac{x}{2}+s)}
 \ = \ \frac{1}{b(x)}{x \brace  \frac{x}{2}+s} \ = \ d_x(s),$$
where we have used Corollary \ref{by}. \ \ The moment of order $\ 2k \ $ is given
$$E(s^{2k}) \ = \ \frac{1}{b(x)}\int_{-\frac{x}{2}}^{\frac{x}{2}}s^{2k}{x \brace  \frac{x}{2}+s}ds.$$
Making the change of variable $\ t = \frac{x}{2} + s \ $ we obtain that:
$$E(s^{2k}) \ \ = \ \ \frac{1}{b(x)}\sum_{l=0}^{2k}\binom{2k}{l}(-\frac{x}{2})^{2k-l}\int_{0}^{x}t^{l}{x \brace  t}dt.$$
Property 2 follows from this identity using Theorem \ref{zz}.

Consider Property 3. We showed in \cite{cd} that the function $\ {x \brace s} \ = \ \mathrm{vol}(\Gamma(s, x-s)),\ $  for $0 \ \leq s \ \leq x,$ achieves its maximum at $\ s= \frac{x}{2}. \ $ Thus, $ \ d_x \  $ achieves its maximum at $\ s =0. \ $

Property 4 follows since the functions $\ d_x \ $ are non-negative, almost continuous (with discontinuity points  $\ -\frac{x}{2} \ $ and
$\ \frac{x}{2}, \ $) of total mass $\ 1, \ $ and support in the interval $\ [-\frac{x}{2}, \frac{x}{2}]. \ $ Let $\ f(s) \ $ be a continuous function  on $\ \mathbb{R}. \ $ Given $\ \epsilon > 0 \ $ choose $\ x  \ $ small enough such that $\ |f(s) - f(0)| < \epsilon \ $ for all $\ s \in [-\frac{x}{2}, \frac{x}{2}]. \ $ Under this conditions we have that $$ \bigg| \int_{-\infty}^{\infty} f(s)d_x(s)ds \ - \ f(0) \bigg|  \ \ \leq \ \ \int_{-\frac{x}{2}}^{\frac{x}{2}} \big|f(s)-f(0)\big| d_x(s)ds \ \ < \ \epsilon.$$

\end{proof}

\begin{rem}{\em The continuous binomial density $d_x(s)$  is plotted in Figures 3 \ and \ 4.  The reader should note the remarkable similarity
with the plots for the Brownian motion density, a subject that deserves further study.  }
\end{rem}

Although Theorem \ref{cn} already shows the combinatorial nature of the continuous  binomial
coefficients $\ {x \brace s}, \ $ the combinatorial interpretation is somewhat obscure due to the presence of negative signs.
This problem, as shown below, can be easily overcome by performing the change of variables
$\  (s,x)   \longrightarrow   (s, (t+1)s). \ $

\begin{thm}\label{pbc}
{\em For $\ t \geq 0\ $ and $\ s \geq 0\ $ we have that:
$$ \ {(t+1)s \brace s} \ = \  2+s+ \sum_{n=1}^\infty\frac{t^n}{n!}\Big(2(2n)_n\frac{s^{2n}}{(2n)!} +
(2n+1)_{n}\frac{s^{2n+1}}{(2n+1)!} + (2n-1)_{n}\frac{s^{2n-1}}{(2n-1)!}\Big) .$$
}
\end{thm}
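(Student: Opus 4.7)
The plan is to derive the identity by direct substitution into the closed form for $\{x \brace s\}$ obtained in Theorem \ref{cn}, and then to rearrange using the identity $(m)_n = m!/(m-n)!$ that relates falling factorials to factorial ratios.

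First I would take the compact expression from Theorem \ref{cn},
$$ {x \brace s} \ = \ 2\sum_{n=0}^{\infty}\frac{s^{n}(x-s)^{n}}{n!\,n!} \ + \ x\sum_{n=0}^{\infty}\frac{s^{n}(x-s)^{n}}{n!\,(n+1)!}, $$
and substitute $x = (t+1)s$, so that $x - s = ts$. A routine simplification yields
$$ {(t+1)s \brace s} \ = \ 2\sum_{n=0}^{\infty}\frac{t^{n}s^{2n}}{n!\,n!} \ + \ \sum_{n=0}^{\infty}\frac{t^{n}s^{2n+1}}{n!\,(n+1)!} \ + \ \sum_{n=0}^{\infty}\frac{t^{n+1}s^{2n+1}}{n!\,(n+1)!}, $$
where the last line comes from writing $(t+1)s = ts + s$ and distributing. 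The third series can be reindexed by setting $m=n+1$, giving $\sum_{m=1}^{\infty} t^{m}s^{2m-1}/((m-1)!\,m!)$.

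Next I would isolate the $n=0$ contributions from the first two series, namely $2$ and $s$, so that the remaining sums start at $n=1$. This produces the desired leading $2+s$. For the tail, I would factor out $t^n/n!$ in each of the three remaining series and recognize that
$$ \frac{1}{n!\,n!} \ = \ \frac{1}{n!}\cdot\frac{(2n)_n}{(2n)!}, \quad \frac{1}{n!\,(n+1)!} \ = \ \frac{1}{n!}\cdot\frac{(2n+1)_n}{(2n+1)!}, \quad \frac{1}{(n-1)!\,n!} \ = \ \frac{1}{n!}\cdot\frac{(2n-1)_n}{(2n-1)!}, $$
using $(2n)_n = (2n)!/n!$, $(2n+1)_n = (2n+1)!/(n+1)!$, and $(2n-1)_n = (2n-1)!/(n-1)!$. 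Substituting these back gives exactly the three summands inside the stated series indexed by $n\geq 1$.

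No genuine obstacle arises here; the only mild pitfall is bookkeeping. One must carefully pull out the $n=0$ constant term $2$ from the first series and the $n=0$ linear term $s$ from the second series before performing the reindexing of the third series, since it is precisely these two separated terms that combine into the prefix $2+s$ of the formula. Convergence of the rearranged series is not an issue, as noted in the remark following Theorem \ref{cn}.
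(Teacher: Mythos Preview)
Your proof is correct and follows essentially the same route as the paper's own argument. The only cosmetic difference is that the paper substitutes $x=s$, $y=ts$ directly into the three-term form of Identity~\ref{iden} (i.e., into $\mathrm{vol}(\Gamma(s,ts))$), whereas you start from the compact two-term form in Theorem~\ref{cn} and then split the factor $x=(t+1)s$ into $ts+s$; after that point the manipulations (falling-factorial rewriting, extraction of the $n=0$ terms $2+s$, and reindexing of the third series) are identical.
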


\begin{proof}
$${(t+1)s \brace s}  \  =  \ \mathrm{vol} (\Gamma(s,ts))   \ =  \
\sum_{n=0}^\infty\Big(2\frac{s^{n}s^nt^n}{n!n!}\ + \ \frac{s^{n+1}s^{n}t^n}{(n+1)!n!}\ +\ \frac{s^{n}s^{n+1}t^{n+1}}{n!(n+1)!}\Big)\ =  $$
$$\sum_{n=0}^\infty\Big(2(2n)_n\frac{s^{2n}t^n}{(2n)!n!}\ + \ (2n+1)_{n}\frac{s^{2n+1}t^n}{(2n+1)!n!}\ +\ (2n+1)_{n+1}\frac{s^{2n+1}t^{n+1}}{(2n+1)!(n+1)!}\Big)\ = $$
$$2+s+ \sum_{n=1}^\infty\Big(2(2n)_n\frac{s^{2n}t^n}{(2n)!n!}\ + \ (2n+1)_{n}\frac{s^{2n+1}t^n}{(2n+1)!n!}\ +\ (2n-1)_{n}\frac{s^{2n-1}t^{n}}{(2n-1)!n!}\Big).  $$
\end{proof}

Theorem \ref{pbc} can be understood in terms of combinatorial species \cite{berg, Blan2, ecd, dp, j2} as follows.
Let $ \ \mathbb{B} \ $ be the category of finite sets and bijections, and $\ \mathrm{set} \ $ be the category of finite sets and maps.
Let $\ \mathrm{Inj}:\mathbb{B}\times \mathbb{B} \ \longrightarrow \mathrm{set}\ $ be the functor sending a pair
of finite sets $\ (a,b)\ $ to the set $\ \mathrm{Inj}(a,b)\ $ of injective maps from $\ a \ $ to $\ b$.
\begin{center}
\psset{unit=0.5cm}
    \begin{pspicture}(-1,-3)(18,5)
        \pscircle[linewidth=1pt](0.5,2){3}
        \psdot[dotstyle=*,
    dotsize=4pt](-1,2.5)
    \rput(-1,3){$1$}
    \rput(1.5,3.3){$2$}
    \psdot[dotstyle=*,
    dotsize=4pt](1.5,2.8)
    \psdot[dotstyle=*,
    dotsize=4pt](0.4,1.5)
    \rput(0.4,2){$3$}
    \rput(1.4,0){$4$}
    \psdot[dotstyle=*,
    dotsize=4pt](1.4,0.5)
    \pscircle[linewidth=1pt](14,2){3}
    \psdot[dotstyle=*,
    dotsize=4pt](12.5,3)
    \rput(12.5,3.5){$1$}
    \psdot[dotstyle=*,
    dotsize=4pt](14,3)
    \rput(14,3.5){$2$}
    \psdot[dotstyle=*,
    dotsize=4pt](15.5,3)
    \rput(15.5,3.5){$3$}
    \psdot[dotstyle=*,
    dotsize=4pt](13.2,1.8)
    \rput(13.2,2.3){$4$}
    \psdot[dotstyle=*,
    dotsize=4pt](14.7,1.8)
    \rput(14.7,2.3){$5$}
     \psdot[dotstyle=*,
    dotsize=4pt](12.5,0.3)
    \rput(12.5,0.8){$6$}
    \psdot[dotstyle=*,
    dotsize=4pt](14,0.3)
    \rput(14,0.8){$7$}
    \pscurve[linewidth=2pt]{->}(-0.8,2.8)(1.5,4)(8,4.7)(14,4)(15.2,3.5)
    \pscurve[linewidth=2pt]{->}(1.8,3)(7.5,3.8)(12.2,3.2)
    \pscurve[linewidth=2pt]{->}(0.6,1.7)(7,1)(13,2)
    \pscurve[linewidth=2pt]{->}(1.6,0.5)(7,-1)(13.9,0.2)
     \rput(8,-2.4){Figure 4. A map in  $\ B([4], [7])\ $ according to condition 5.}
    \end{pspicture}
    \end{center}
Consider the functor $\ B:\mathbb{B}\times \mathbb{B} \ \longrightarrow \mathrm{set}\ $ defined on $\ (a,b)  \in \mathbb{B} \times \mathbb{B}\ $ by
$$1) \ \ [2] \ \    \mbox{if} \ \  a=b=\emptyset;  \  \ \ \ \  2) \ \ [1] \ \    \mbox{if} \ \ a=\emptyset, \ |b|=1; \ \ \ \ \  3) \ \ [2]\times \mathrm{Inj}(a,b) \ \    \mbox{if} \ \   |b|=2|a| ;$$
$$4) \ \ \mathrm{Inj}(a ,b) \ \    \mbox{if} \ \ \ |b|=2|a|+1; \ \ \ \ \ 5) \ \ \mathrm{Inj}(a,b) \ \    \mbox{if} \ \ |b|=2|a|-1;
\ \ \ \ \  6) \ \ \emptyset \ \   \mbox{otherwise}.$$

Figure 4 \ shows a map  contributing to $\ B \ $ trough condition 5 above.

\begin{cor}{\em The generating function of $\ B \ \ $ is
$\ \ \displaystyle {(t+1)s \brace s}. $
}
\end{cor}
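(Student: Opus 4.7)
My plan is to compute the bisorted exponential generating function of $B$ directly from its case-by-case definition, and then to recognize the resulting power series as the expansion for ${(t+1)s \brace s}$ supplied by Theorem~\ref{pbc}. Following the standard convention for two-sort species, I will let $t$ mark the size of the first argument and $s$ mark the size of the second, so that
$$|B|(s,t) \;=\; \sum_{n,k \geq 0} |B([n],[k])| \,\frac{t^n}{n!}\,\frac{s^k}{k!}.$$
The choice of which variable marks which argument is forced by the shape of the target formula: the factors $\frac{t^n}{n!}$ multiplied against powers of $s$ ranging in $\{2n-1,2n,2n+1\}$ in Theorem~\ref{pbc} correspond to injections from a set of size $n$ into a set whose cardinality is nearly twice as large.

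The first step is to enumerate $|B([n],[k])|$ by walking through the six defining clauses. Clauses 1 and 2 contribute the isolated terms $2$ and $s$, respectively. For $n\geq 1$ the only nonzero values occur when $k\in\{2n-1,2n,2n+1\}$, and in those cases clauses 3--5 give $2\cdot|\mathrm{Inj}([n],[2n])|$, $|\mathrm{Inj}([n],[2n+1])|$, and $|\mathrm{Inj}([n],[2n-1])|$, all of which follow from the standard formula $|\mathrm{Inj}([n],[m])|=m!/(m-n)!$.

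Next I would substitute these counts into the double sum and normalize by $n!\,k!$. Invoking the falling-factorial identities $(2n)_n=(2n)!/n!$, $(2n+1)_n=(2n+1)!/(n+1)!$, and $(2n-1)_n=(2n-1)!/(n-1)!$ turns the three generic families into precisely the three summands of the expansion of ${(t+1)s \brace s}$ recorded in Theorem~\ref{pbc}, while clauses 1 and 2 provide the initial $2+s$. The required identity then falls out by inspection.

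The main obstacle is purely organizational: fixing the bisorted generating-function convention so that the answer matches Theorem~\ref{pbc} on the nose rather than a transposed or reparametrized variant, and making certain that the factorial normalizations line up with the falling-factorial form used there. No substantial combinatorial identity is needed, because the species $B$ has been engineered so that its three generic clauses (conditions 3--5) correspond term for term to the three series appearing in Theorem~\ref{pbc}, with the boundary clauses 1 and 2 absorbing the $n=0$ contributions.
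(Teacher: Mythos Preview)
Your proposal is correct and follows essentially the same route as the paper's own proof: compute the two-variable exponential generating function $\sum_{n,m}|B([n],[m])|\frac{t^n s^m}{n!\,m!}$ from the case definition of $B$, use that $|\mathrm{Inj}([n],[m])|=(m)_n$, and match the result term by term against the expansion of ${(t+1)s \brace s}$ in Theorem~\ref{pbc}. The paper's argument is stated more tersely but is the same in substance.
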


\begin{proof}
The result follows from the definition of $B$,  Theorem \ref{pbc}, the definition of the generating function
$$ \sum_{n,m=0}^{\infty}\big|B([n],[m])\big| \frac{t^ns^m}{n!m!},$$ and the fact that $\ (a)_n \ $ counts injective
functions from $\ [n] \ $ to $\ [a].$
\end{proof}

\begin{prop}{\em
The following  identity holds
$$ \ {2s \brace s} \ =  \ 2\sum_{n=0}^\infty {n \choose \lfloor n/2 \rfloor }\frac{s^{n}}{n!}\ = \ 2\big(I_0(2s)  +
 I_1(2s)\big).$$ }
\end{prop}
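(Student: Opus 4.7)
The plan is to obtain both equalities directly from the closed-form power series for $\ {x \brace s}\ $ given in Theorem~\ref{cn}, after specializing $\ x = 2s. \ $ The key observation is that the series simplifies dramatically in this case because the factor $\ (x-s)^n \ $ becomes $\ s^n, \ $ collapsing each summand into a pure monomial in $\ s$.

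First I would specialize the formula
$$
{x \brace s} \ = \ 2\sum_{n=0}^{\infty}\frac{s^n(x-s)^n}{n!\,n!} \ + \ x\sum_{n=0}^{\infty}\frac{s^n(x-s)^n}{n!\,(n+1)!}
$$
from Theorem~\ref{cn} at $\ x = 2s\ $ to get
$$
{2s \brace s} \ = \ 2\sum_{n=0}^{\infty}\frac{s^{2n}}{n!\,n!} \ + \ 2\sum_{n=0}^{\infty}\frac{s^{2n+1}}{n!\,(n+1)!}.
$$
Comparing with the defining series
$\ I_0(z) = \sum_{k\geq 0}(z/2)^{2k}/(k!)^2\ $ and $\ I_1(z) = \sum_{k\geq 0}(z/2)^{2k+1}/(k!(k+1)!)\ $ recalled in the proof of Koornwinder's theorem, evaluated at $\ z = 2s, \ $ identifies the first sum with $\ I_0(2s)\ $ and the second with $\ I_1(2s),\ $ which establishes the second equality.

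For the first equality I would split the sum $\ \sum_{n\geq 0}\binom{n}{\lfloor n/2\rfloor}s^n/n!\ $ by the parity of $\ n. \ $ For $\ n=2m\ $ the coefficient simplifies via
$\ \binom{2m}{m}/(2m)! = 1/(m!)^2, \ $ giving $\ \sum_{m\geq 0}s^{2m}/(m!)^2 = I_0(2s). \ $ For $\ n=2m+1\ $ the coefficient simplifies via $\ \binom{2m+1}{m}/(2m+1)! = 1/(m!(m+1)!), \ $ giving $\ \sum_{m\geq 0}s^{2m+1}/(m!(m+1)!) = I_1(2s). \ $ Multiplying by $2$ matches the expression just obtained for $\ {2s \brace s}. \ $

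No real obstacle is expected: the entire argument is a specialization $\ x=2s\ $ followed by recognition of two standard Bessel series and a parity split of the central-binomial generating function. The only minor point worth checking is uniform convergence of the rearrangements, which is immediate from the double factorials in the denominators (as noted in the remark following Identity~\ref{iden}).
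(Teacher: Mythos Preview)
Your argument is correct. The paper states this proposition without proof, and your verification --- specializing Theorem~\ref{cn} at $x=2s$, identifying the two resulting series with $I_0(2s)$ and $I_1(2s)$ via the definitions recalled in Koornwinder's theorem, and then recovering the central-binomial generating series by a parity split --- is exactly the natural check the paper leaves implicit.
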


Thus, quite pleasantly,  the midpoint continuous binomial $\ {2s \brace s}\ $ is twice the generating function of the midpoint binomial coefficients.

To obtain a continuous analogue for the binomial coefficients we used their combinatorial interpretation as paths in a suitable lattice, and thus our interpretation for the continuous binomial coefficients counts directed paths in the corresponding direct manifold. The usefulness of the interpretation of the binomials coefficients as counting subsets of a fixed cardinality can hardly be overstated, so it is natural to ponder whether  an analogue interpretation is available for the continuous binomial coefficients.\\

Let $\ \mathrm{U}[x,s] \ $ be the family of subsets $\ S \ \subseteq \ [0,x]\ $ such that:
\begin{itemize}
\item $S$ is a finite disjoint union of closed subintervals of $\ [0,x]$.
\item The sum of the lengths of the closed subintervals defining $\ S\ $ is equal to $\ s$.
\end{itemize}

The linear order on $\ [0,x] \ $ induces a linear order on the closed subintervals defining a set $\ S \in \mathrm{U}[x,s].\ $ Consider the  map $$ \mathrm{path}: \mathrm{U}[x,s]\ \longrightarrow \ \Gamma(s,x-s) $$
sending  $\ S \ = \ [a_1,b_1]\ \sqcup \ [a_2,b_2] \ \sqcup \ \cdots \ \sqcup \ [a_n,b_n] \ $ in $ \ \mathrm{U}[x,s],\ $ written in the linear order, to the directed path in $\ \Gamma(s,x-s) \ $ constructed as follows (see Figure 5 where $\ S \ $ is the union of the marked subintervals on the left):
\begin{itemize}
 \item For $\ S = \emptyset, \ $ a valid choice if and only if $ \ s=0, \ $ the associated path has format $\ (2) \ $ and time distribution $\ (x).$
  \item If $\ a_1=0\ $ and $\ b_n=x, \ $ then the associated path has format $\ (1,2,...,2,1)\ $ of length $\ 2n-1 \ $ and time distribution
  $\ (b_1,a_2-b_1,b_2-a_2,...,x-a_n).$
  \item  If $\ a_1=0\ $ and $\ b_n<x, \ $ then the associated path has format $\ (1,2,...,1,2)\ $ of length $\ 2n \ $ and time distribution
  $\ (b_1,a_2-b_1,b_2-a_2,...,b_n -a_n, x-b_n).$
   \item If $\ a_1>0 \ $ and $\ b_n=x, \ $ then the associated path has format $\ (2,1,...,2,1)\ $ of length $\ 2n \ $ and time distribution
  $\ (a_1,b_1-a_1,a_2-b_1,...,x-a_n).$
  \item  If $\ a_1>0 \ $ and $\ b_n<x, \ $ then the associated path has format $\ (2,1,...,1,2) \ $ of length $\ 2n+1 \ $ and time distribution
  $\ (a_1,b_1-a_1,a_2-b_1,...,b_n -a_n, x-b_n).$
\end{itemize}

\begin{center}
\psset{unit=1cm}
    \begin{pspicture}(-2,-1)(14,3)
        \psline[linewidth=1pt]{->}(0,0)(6,0)
        \rput(0,-0.4){$0$}
        \psline[linewidth=3pt]{}(0,0)(1.8,0)
        \rput(1.8,-0.4){$1.8$}
        \psline[linewidth=3pt]{}(2.5,0)(3.5,0)
        \rput(2.5,-0.4){$2.5$}
        \rput(3.5,-0.4){$3.5$}
        \psline[linewidth=3pt]{}(4.2,0)(5,0)
        \rput(4.2,-0.4){$4.2$}
        \rput(5,-0.4){$5$}
        \psline[linewidth=1pt]{->}(8,0)(8,3)
        \psline[linewidth=1pt]{->}(7,0)(13.5,0)
        \psline[linewidth=2pt]{->}(8,0)(9.8,0)
        \psline[linewidth=2pt]{->}(9.8,0)(9.8,0.7)
        \psline[linewidth=2pt]{->}(9.8,0.7)(10.8,0.7)
        \psline[linewidth=2pt]{->}(10.8,0.7)(10.8,1.4)
        \psline[linewidth=2pt]{->}(10.8,1.4)(13,1.4)
        \rput(9.8,-0.4){$1.8$}
        \rput(7.6,0.7){$0.7$}
        \rput(10.8,-0.4){$2.8$}
        \rput(7.6,1.4){$1.4$}
        \rput(13,-0.4){$3.6$}
        \rput(7,-1){Figure 5. Set  $\ S \ $ in $\  \mathrm{U}[0,5] \ $ and its associated directed path.}
    \end{pspicture}
    \end{center}
It is easy to see that the map $\ \mathrm{path} \ $ is  injective. Moreover the map  $\ \mathrm{path} \ $ is essentially surjective, i.e. the image of $\ \mathrm{path} \ $ has full measure. Indeed, to show that $$\ \displaystyle \mathrm{vol}\big(  \mathrm{\Gamma}(s,x-s) \setminus \mathrm{path}(\mathrm{U}[x,s] )\big) \ = \ 0 $$ one simply notes
that a path in $\ \displaystyle \mathrm{\Gamma}(s,x-s) \setminus \mathrm{path}(\mathrm{U}[x,s] ) \ $ must have at least one coordinate equal to zero, and therefore the later set is included in a finite union of codimension one subsets, which implies that it has to be a set of measure zero.

Since $\ \mathrm{path} \ $ is a bijection onto its image, a set of full measure, one obtains by pull-back  a measure-procedure  on $\ \mathrm{U}[x,s]. \ $ Thus we have shown the following result.

\begin{prop}{\em For $0 \leq s \leq x, \ $  we have that:
$${x \brace s} \ = \ \mathrm{vol}\big( \mathrm{U}[x,s]\big) \ = \ \sum_{n=0}^{\infty}\mathrm{vol}\big( \mathrm{U}_n[x,s]\big),$$
where $\ \mathrm{U}_n[x,s] \ \subseteq \ \mathrm{U}[x,s] \ $ consists of sets which are the union of $\ n \ $ closed subintervals.
}
\end{prop}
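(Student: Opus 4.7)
The plan is to verify that the map $\mathrm{path}$ constructed just above the proposition induces the desired identity on volumes by (i) checking it is a measure-preserving bijection between $\mathrm{U}[x,s]$ and its image inside $\Gamma(s,x-s)$, and (ii) matching the stratification of $\mathrm{U}[x,s]$ by $n$ with the decomposition of $\Gamma(s,x-s)$ by pattern. Since the statement $\mathrm{vol}(\Gamma(s,x-s)\setminus \mathrm{path}(\mathrm{U}[x,s]))=0$ is already argued in the text, combining these two ingredients with Definitions \ref{vol} and \ref{c} yields the result.

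First I would put a measure on $\mathrm{U}_n[x,s]$ directly. A subset $S=[a_1,b_1]\sqcup\cdots\sqcup[a_n,b_n]$ is determined by an increasing sequence $0\le a_1<b_1<a_2<\cdots<b_n\le x$ subject to the length constraint $\sum_i(b_i-a_i)=s$. Equivalently, using the gap coordinates $(r_0,\ell_1,r_1,\ell_2,\ldots,\ell_n,r_n)$ with $r_0=a_1$, $\ell_i=b_i-a_i$, $r_i=a_{i+1}-b_i$ (and $r_n=x-b_n$), the set $\mathrm{U}_n[x,s]$ is cut out of $\mathbb{R}_{\ge 0}^{2n+1}$ by the linear equations $\sum\ell_i=s$ and $\sum r_i=x-s$, so it is a product of two simplices. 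Its measure, pulled back from $\Gamma(s,x-s)$ by $\mathrm{path}$, is precisely this Lebesgue measure, since $\mathrm{path}$ is an affine isomorphism onto the corresponding polytopes $\Gamma^c(s,x-s)$ of Proposition \ref{p}.

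Next I would match strata by pattern. Inspection of the four nonempty cases in the definition of $\mathrm{path}$ shows that $\mathrm{U}_n[x,s]$ (for $n\ge 1$) is the disjoint union of four pieces determined by the answers to the two boolean conditions $a_1=0$ and $b_n=x$; these map respectively to $\Gamma^c(s,x-s)$ with $c$ of length $2n-1$, $2n$, $2n$, $2n+1$ and starting with $1$ or $2$ as appropriate. Running over all $n$ and comparing with
\[
\Gamma(s,x-s)\;=\;\coprod_{m\ge 0}\coprod_{c\in D(m,2)}\Gamma^c(s,x-s),
\]
every pattern $c\in D(m,2)$ is hit exactly once, and the $n=0$ case $S=\emptyset$ corresponds to the degenerate pattern $(2)$ of length $1$ with time distribution $(x)$, complemented by the full interval case giving pattern $(1)$. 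Thus the disjoint union $\bigsqcup_n \mathrm{U}_n[x,s]$ is sent bijectively (up to a measure-zero set consisting of paths with some $s_i=0$) onto $\Gamma(s,x-s)$.

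Finally, since $\mathrm{path}$ is affine-linear on each stratum and restricts to an isometry between the relevant affine hulls (the differences $b_i-a_i$ and the gaps $a_{i+1}-b_i$ are exactly the time-distribution coordinates $s_i$), volumes are preserved stratum by stratum. Summing,
\[
\sum_{n=0}^{\infty}\mathrm{vol}(\mathrm{U}_n[x,s])\;=\;\sum_{m=0}^{\infty}\sum_{c\in D(m,2)}\mathrm{vol}(\Gamma^c(s,x-s))\;=\;\mathrm{vol}(\Gamma(s,x-s))\;=\;{x\brace s}
\]
by Definitions \ref{vol} and \ref{c}. The only subtle point, already handled in the text preceding the proposition, is the essential surjectivity: paths in $\Gamma(s,x-s)$ with some coordinate $s_i=0$ are not in the image of $\mathrm{path}$, but they live in a finite union of codimension-one faces and contribute zero volume, so the identification is valid almost everywhere and the volume identity holds exactly.
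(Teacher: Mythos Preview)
Your proposal is correct and follows essentially the same route as the paper: the paper's proof \emph{is} the discussion immediately preceding the proposition (``Thus we have shown the following result''), namely that $\mathrm{path}$ is injective with image of full measure, and the measure on $\mathrm{U}[x,s]$ is defined by pull-back. You have simply spelled out the pattern-by-stratum bookkeeping and the affine/isometric nature of $\mathrm{path}$ in more detail than the paper does, which is a welcome elaboration but not a different argument.
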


\section{Continuous Catalan Numbers}\label{5}

We proceed to construct continuous analogues for the Catalan numbers  $\  c_n  =  \frac{1}{n+1}{2n
\choose n}.\  $ The Catalan numbers admit a myriad of interesting combinatorial interpretations, see Stanley's book \cite{rs}. Among those we work with a lattice path interpretation because that is what is needed for our present purposes. \\

Consider  step vectors $\ V = \{(1,1), (1,-1)\} \ \subseteq \
\mathbb{Z}^2 \ \subseteq \ \mathbb{R}^2. \ $ It is well-known  that the Catalan numbers count Dyck paths (see Figure 5), i.e.:
$$c_n \ =  \  \big|\{ \mbox{lattice \ paths  \ in } \ \mathbb{R}_{\geq 0}^2 \ \ \mbox{from} \ \ (0,0) \ \ \mbox{to} \ \ (2n,0)\}\big|.$$
\begin{center}
\psset{unit=0.5cm}
    \begin{pspicture}(-1,-3)(18,5)
        \psgrid[subgriddiv=1,griddots=8,gridlabels=8pt](-2,-1)(18,5)
        \psline[linewidth=1pt]{->}(0,0)(3,3)
        \psline[linewidth=1pt]{->}(3,3)(5,1)
        \psline[linewidth=1pt]{->}(3,3)(5,1)
        \psline[linewidth=1pt]{->}(5,1)(6,2)
        \psline[linewidth=1pt]{->}(6,2)(7,1)
        \psline[linewidth=1pt]{->}(0,0)(0,5)
        \psline[linewidth=1pt]{->}(-2,0)(18,0)
        \psline[linewidth=1pt]{->}(7,1)(10,4)
        \psline[linewidth=1pt]{->}(10,4)(12,2)
        \psline[linewidth=1pt]{->}(12,2)(13,3)
        \psline[linewidth=1pt]{->}(13,3)(16,0)
        \rput(8,-2.4){Figure 5. A Dyck lattice path from $\ 0 \ $ to $\ 16$.}
    \end{pspicture}
    \end{center}
If  such a path has a pattern of length $\ 2k, \ $ then it has $\ k \ $ peaks and $\ k-1 \ $ valley points. Therefore pattern decomposition induces the counting of Dyck paths by the number of peaks \cite{nara0, nara1}, i.e. it leads to the Narayana identity
$$\frac{1}{n+1}{2n \choose n}\ = \ \sum_{k=1}^n \frac{1}{n}{n \choose k}{n \choose k-1}  .$$

To construct continuous analogues for the Catalan numbers we
consider the directed manifold $\ (\mathbb{R}^2,  (1,1),  (1,-1)) \ $
and  measure directed paths from $\ (0,0) \ $ to $\ (x,0) \ $
fully included in $\ \mathbb{R}_{\geq 0}^2, \ $ see Figure 4.
\begin{center}
\psset{unit=0.5cm}
    \begin{pspicture}(-2,-3)(16,5)
        \psgrid[subgriddiv=1,griddots=8,gridlabels=8pt](-2,-1)(16,5)
        \psline[linewidth=1pt]{->}(0,0)(2.75,2.75)
       \psline[linewidth=1pt]{->}(2.75,2.75)(4,1.5)
       \psline[linewidth=1pt]{->}(4,1.5)(5.25,2.75)
       \psline[linewidth=1pt]{->}(5.25,2.75)(6.75,1.25)
       \psline[linewidth=1pt]{->}(6.75,1.25)(8.85,3.35)
       \psline[linewidth=1pt]{->}(8.85,3.35)(10.9,1.3)
       \psline[linewidth=1pt]{->}(10.9,1.3)(12.2,2.6)
       \psline[linewidth=1pt]{->}(12.2,2.6)(14.8,0)
       \psline[linewidth=1pt]{->}(0,0)(0,5)
        \psline[linewidth=1pt]{->}(-2,0)(16,0)
    \rput(8,-2.4){Figure 6. A Dyck directed path from $\ 0 \ $ to $\ 14.8$.}
    \end{pspicture}
\end{center}
Given $\ (x,y) \in \mathbb{R}_{\geq 0}^2\ $ we let $\
\Lambda(x,y)\ $ be the moduli space of directed paths from $\
(0,0) \ $ to $\ (x,y) \ $ included in $\ \mathbb{R}_{\geq 0}^2 \ $
with patterns of the form $\ (1,2,\dots, 1,2) , \ $ and let
$\ \Lambda^n(x,y) \ $ be the set of directed paths with pattern of length $\ 2n+2. \ $ By construction $\ \mathrm{vol}(\Lambda^k(2n,0)) \ $ is a continuous analogue of the Narayana number $\ \frac{1}{n}{n \choose k}{n \choose k-1},  \  $  for $\ 0 \leq k < n, \ $ i.e. there are exactly  $\ \frac{1}{n}{n \choose k}{n \choose k-1}  \  $ integer points in the interior of $\ \Lambda^k(2n,0).$

\begin{defn}{\em For $\ 0 < y < x, \ $ in $ \ \mathbb{R} \ $ the two-variables continuous Catalan function $\ C(x,y) \ $ is given by
$$C(x,y) \ \ = \ \  \sum_{n=0}^{\infty} \mathrm{vol}(\Lambda^n(x,y)).$$
The domain of $\ C \ $ is extended to $\ 0 \leq y \leq x \ $ by continuity.
The one-variable continuous Catalan function is given
by $\ C(x) =  C(x,0) \ \ $ for $\ \ x \in \mathbb{R}_{\geq 0}.$
}
\end{defn}

\begin{prop}\label{w}
{\em Consider the moduli space  $\ \Lambda^n(x,y) \ $  for $\ 0 \leq y \leq x \ $ and $\ n \in \mathbb{N}.$
\begin{enumerate}
  \item  $ \Lambda^n(x,y) \ $  is the convex polytope given in simplicial coordinates by:
$$s_0 + \cdots + s_n \  = \ \frac{x+y}{2}, \ \ \ \ \ \ \ \  t_0 + \cdots + t_n \  = \ \frac{x-y}{2},$$
$$s_0 \  \geq \ t_0, \ \ \ \ s_0 + s_1 \  \geq \ t_0 + t_1, \ \ \ \ \cdots \ \ \ \  s_0 + \cdots + s_{n-1} \  \geq \ t_0 + \cdots + t_{n-1}.$$
In particular $ \ \Lambda^0(x,y) \ = \ \{ (\frac{x+y}{2}, \frac{x-y}{2}) \} \ $  and thus $\ \mathrm{vol}(\Lambda^0(x,y)) = 1.$
  \item For $n \geq 1, \ $ the convex polytope $ \ \Lambda^n(x,y) \ $ is given in Cartesian coordinates by:
$$0\leq x_1 \leq \cdots \leq x_n \leq \frac{x+y}{2}, \ \ \ \ \ 0\leq y_1 \leq \cdots \leq y_n \leq \frac{x-y}{2}, \ \ \ \ \ x_i \geq y_i.$$

\item For $n \geq 1, \ $ $\ \Lambda^n(x,y) \ $ is given in terms of valley points coordinates by:
$$0 \ \leq \ a_1 + b_1 \ \leq \ \cdots \ \leq \ a_n + b_n \ \leq \ x + y, $$
$$0 \ \leq \ a_1 - b_1 \ \leq \ \cdots \ \leq \ a_n - b_n \ \leq \ x - y. $$
\end{enumerate}
}
\end{prop}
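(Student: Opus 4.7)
The plan is to parameterize $\Lambda^n(x,y)$ explicitly using the time distributions of paths whose pattern is $(1,2,1,2,\ldots,1,2)$ of length $2n+2$, and then to recognize the three stated descriptions as equivalent via two explicit linear changes of variables.

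For item 1, I would denote the time distribution by $(s_0,t_0,s_1,t_1,\ldots,s_n,t_n)\in\mathbb{R}_{\geq 0}^{2n+2}$, where $s_i$ (resp.\ $t_i$) is the time spent in the $(i+1)$-th up-step $v_1=(1,1)$ (resp.\ down-step $v_2=(1,-1)$). A direct computation shows that the $k$-th valley lies at $\bigl(\sum_{i<k}(s_i+t_i),\,\sum_{i<k}(s_i-t_i)\bigr)$ for $k=1,\ldots,n$, while the endpoint lies at $\bigl(\sum_i(s_i+t_i),\,\sum_i(s_i-t_i)\bigr)$. Equating the endpoint to $(x,y)$ yields the two simplex-sum constraints. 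Because every step has positive $x$-component and the $y$-coordinate attains its local minima only at the valleys, requiring the whole path to lie in $\mathbb{R}^2_{\geq 0}$ reduces to nonnegativity of each valley's $y$-coordinate, producing exactly the inequality chain of item 1. For $n=0$ the two sum constraints force $(s_0,t_0)=((x+y)/2,(x-y)/2)$, a single point with volume $1$ by the standard convention for $0$-dimensional polytopes.

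For item 2 (with $n\geq 1$) I would apply the triangular linear map $x_i:=s_0+\cdots+s_{i-1}$, $y_i:=t_0+\cdots+t_{i-1}$ for $i=1,\ldots,n$. The sum constraints fix $s_n=(x+y)/2-x_n$ and $t_n=(x-y)/2-y_n$, so the remaining $2n$ variables $x_1,\ldots,x_n,y_1,\ldots,y_n$ are free. Nonnegativity of $s_0,\ldots,s_{n-1}$ becomes $0\leq x_1\leq\cdots\leq x_n$ and $s_n\geq 0$ becomes $x_n\leq (x+y)/2$; the analogous statement holds for the $y_i$. The valley inequalities $\sum_{i<k}s_i\geq\sum_{i<k}t_i$ translate to $x_k\geq y_k$, completing the Cartesian description.

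For item 3 I would perform one more linear change, $a_i:=x_i+y_i$ and $b_i:=x_i-y_i$, observing that $(a_i,b_i)$ is precisely the Cartesian position of the $i$-th valley. The chain $0\leq x_i\leq x_{i+1}\leq (x+y)/2$ becomes, after doubling, $0\leq a_i+b_i\leq a_{i+1}+b_{i+1}\leq x+y$, and the analogous $y_i$-chain yields the $a_i-b_i$ chain. The whole argument is bookkeeping with invertible triangular linear maps, so I do not anticipate a serious obstacle; the only point that needs care is keeping track of which variables parameterize the affine hull of $\Delta_n^t$ so that the passage from the $(2n+2)$-tuples of item 1 to the $(2n)$-tuples of items 2 and 3 is an unambiguous bijection of polytopes. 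If the polytopal volumes are later to be compared across these descriptions one must also carry the Jacobians of the two linear maps.
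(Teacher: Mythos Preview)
Your proposal is correct and follows essentially the same route as the paper: the paper derives item~1 by citing Proposition~\ref{p} (which is exactly the time-distribution parameterization you spell out), obtains item~2 from item~1 via the same partial-sum change of variables $x_i=s_0+\cdots+s_{i-1}$, $y_i=t_0+\cdots+t_{i-1}$, and obtains item~3 from item~2 via the same substitution $a_i=x_i+y_i$, $b_i=x_i-y_i$. Your write-up simply makes explicit the bookkeeping that the paper leaves implicit, and your closing remark about tracking Jacobians is a sensible caution for the volume computations that follow.
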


\begin{proof}
Item 1 follows directly from Proposition \ref{p}. Item 2 follows from item 1 making the change of variables $\ \ x_i \ = \ s_0 \ + \ \cdots \ + t_{i-1}\ \ $
and $\ \ y_i \ = \ t_0 \ + \ \cdots \ + t_{i-1}\ \ $ for
$\ 1 \leq i \leq n.\ $  Item 3 follows from item 2 making the change of variables $\ a_i=x_i + y _i \ $ and $\ b_i = x_i - y_i.$
\end{proof}

\begin{cor}{\em
The infinite sum defining $ \ C(x,y) \ $ is convergent and uniformly convergent on bounded sets.}
\end{cor}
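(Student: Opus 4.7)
The plan is to bound $\mathrm{vol}(\Lambda^n(x,y))$ term by term using the Cartesian description from Proposition \ref{w}(2), which presents $\Lambda^n(x,y)$ as the convex polytope
\[
\big\{ 0 \leq x_1 \leq \cdots \leq x_n \leq \tfrac{x+y}{2},\ \ 0 \leq y_1 \leq \cdots \leq y_n \leq \tfrac{x-y}{2},\ \ x_i \geq y_i \big\}.
\]
Dropping the order constraints $x_i \geq y_i$ only enlarges the region, and what remains is a product of two order simplices. So the first step is to observe
\[
\mathrm{vol}(\Lambda^n(x,y)) \ \leq \ \mathrm{vol}\bigl(\Delta_{(x+y)/2}^n\bigr)\cdot\mathrm{vol}\bigl(\Delta_{(x-y)/2}^n\bigr) \ = \ \frac{1}{(n!)^2}\left(\frac{x+y}{2}\right)^n\left(\frac{x-y}{2}\right)^n \ = \ \frac{(x^2-y^2)^n}{4^n(n!)^2},
\]
where $\Delta_a^n = \{0 \leq u_1 \leq \cdots \leq u_n \leq a\}$ has volume $a^n/n!$.

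Next, for any bounded set $K \subseteq \{0 \leq y \leq x\}$, pick $M > 0$ with $\frac{x^2 - y^2}{4} \leq M$ for all $(x,y) \in K$. Then on $K$ each term of the series satisfies $\mathrm{vol}(\Lambda^n(x,y)) \leq M^n/(n!)^2$. Since $\sum_{n=0}^{\infty} M^n/(n!)^2$ converges (this is $I_0(2\sqrt{M})$), the Weierstrass $M$-test delivers absolute and uniform convergence of $\sum_n \mathrm{vol}(\Lambda^n(x,y))$ on $K$. As $K$ was an arbitrary bounded subset of the domain, this yields both the convergence and the uniform convergence on bounded sets.

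The one part that requires a moment's care is justifying the simplex-product bound: one should explicitly note that the defining inequalities of $\Lambda^n(x,y)$ in the two blocks of coordinates $(x_1,\ldots,x_n)$ and $(y_1,\ldots,y_n)$ are precisely the order-simplex constraints, with the coupling $x_i \geq y_i$ imposing only additional restrictions. No obstacle beyond this should arise; the estimate is transparent once the Cartesian description of Proposition \ref{w}(2) is in hand.
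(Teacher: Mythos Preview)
Your proposal is correct and follows essentially the same approach as the paper: bound $\mathrm{vol}(\Lambda^n(x,y))$ by the volume of a product of simplices obtained by dropping the coupling constraints $x_i \geq y_i$, then observe that the resulting majorant series $\sum_n \big(\tfrac{x+y}{2}\big)^n\big(\tfrac{x-y}{2}\big)^n/(n!)^2$ converges uniformly on bounded sets. The only cosmetic difference is that you work from the Cartesian description of Proposition~\ref{w}(2) and make the Weierstrass $M$-test explicit, whereas the paper phrases the bound via the simplices $\Delta_n^{(x\pm y)/2}$ from Proposition~\ref{w}(1) and leaves the uniform-convergence step to the reader.
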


\begin{proof}
From Proposition \ref{w} we have that
$$0 \ \leq \ \sum_{n=0}^{\infty} \mathrm{vol}(\Lambda^n(x,y))\  \leq \   \sum_{n=0}^{\infty} \mathrm{vol}(\Delta_{n}^{\frac{x+y}{2}})
 \mathrm{vol}( \Delta_{n}^{\frac{x-y}{2}})
 \ = \  \sum_{n=0}^{\infty} \frac{(x+y)^n}{n!} \frac{(x-y)^n}{n!}.$$
The later series has the desired properties.
\end{proof}

\begin{exmp}{\em For $\ 0 \leq y \leq x,\ $ the polytope  $ \ \Lambda^{1}(x,y)\ $ is given by
$$ 0 \leq x_1 \leq \frac{x+y}{2}, \ \ \ \ \  0 \leq y_1 \leq \frac{x-y}{2}, \ \ \ \ \ y_1 \leq x_1.$$
Applying the change of variables $\ (a,b)=(x_1 - y_1, y_1) \ $ (with Jacobian determinant $1$)  we obtain the polytope given by
$$ 0 \ \leq b \ \leq \ \frac{x-y}{2}, \ \ \ \ \ \ \  0 \ \leq a \ \leq \ \frac{x+y}{2}-b.$$
Thus we have that: $$\mathrm{vol}(\Lambda^{1}(x,y)) \  =  \ \int_{0}^{\frac{x-y}{2}} \int_{0}^{\frac{x+y}{2}-b}1dadb \ = \
\frac{1}{8}(x-y)(x+3y) \ \ \ \ \mbox{and}
\ \ \ \ \mathrm{vol}(\Lambda^{1}(x,0)) \   =  \
\frac{1}{8}x^2.$$}
\end{exmp}

\begin{prop}\label{z}
{\em For $\ 0  \leq y  \leq x, \ $ and $ \ n  \in \mathbb{N}\  $ the following recursion holds:
$$\mathrm{vol}(\Lambda^{n+1}(x,y))  \ \ = \ \ \int_0^{\frac{x-y}{2}} \int_0^{\frac{x+y}{2} -b}\mathrm{vol}(\Lambda^{n}(a+2b,a)) dadb. $$
}
\end{prop}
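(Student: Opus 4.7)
My approach is to peel off the last valley of a path in $\Lambda^{n+1}(x,y)$ and recognise everything that comes before it as a path in $\Lambda^{n}$ of smaller parameters; the integral over valley positions, after a unit-Jacobian change of variables, will produce the stated formula.

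First I would use Proposition \ref{w}(2) to describe $\Lambda^{n+1}(x,y)$ in Cartesian coordinates $(x_1,\dots,x_{n+1},y_1,\dots,y_{n+1})$, equipped with the Lebesgue measure $dx_1\cdots dx_{n+1}\,dy_1\cdots dy_{n+1}$ (this is the measure implicitly used in the $\Lambda^1$ example, as one checks against the closed form $(x-y)(x+3y)/8$). The defining inequalities are
$$0\le x_1\le\cdots\le x_{n+1}\le \tfrac{x+y}{2},\quad 0\le y_1\le\cdots\le y_{n+1}\le \tfrac{x-y}{2},\quad x_i\ge y_i .$$

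Next, I would compute $\mathrm{vol}(\Lambda^{n+1}(x,y))$ by iterated integration, doing the outer integral over $(x_{n+1},y_{n+1})$ and the inner integral over $(x_1,\dots,x_n,y_1,\dots,y_n)$. For fixed $(x_{n+1},y_{n+1})$, the inner integration region is cut out by exactly the Cartesian inequalities of Proposition \ref{w}(2) applied to the parameters
$$X=x_{n+1}+y_{n+1},\qquad Y=x_{n+1}-y_{n+1},$$
since then $\tfrac{X+Y}{2}=x_{n+1}$ and $\tfrac{X-Y}{2}=y_{n+1}$. Hence the inner integral equals $\mathrm{vol}(\Lambda^{n}(X,Y))$, and one obtains
$$\mathrm{vol}(\Lambda^{n+1}(x,y))=\iint \mathrm{vol}\bigl(\Lambda^{n}(x_{n+1}+y_{n+1},\,x_{n+1}-y_{n+1})\bigr)\,dx_{n+1}\,dy_{n+1},$$
integrated over $0\le y_{n+1}\le x_{n+1}$, $x_{n+1}\le (x+y)/2$, $y_{n+1}\le (x-y)/2$.

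Finally, I would apply the affine change of variables $x_{n+1}=a+b,\ y_{n+1}=b$, whose Jacobian is $1$ (upper triangular with unit diagonal). Under this substitution $X=x_{n+1}+y_{n+1}=a+2b$ and $Y=x_{n+1}-y_{n+1}=a$, matching the integrand in the claim. Translating the four inequalities above gives $a\ge 0$, $b\ge 0$, $a+b\le (x+y)/2$, $b\le (x-y)/2$, i.e.\ the product region $0\le b\le (x-y)/2$ and $0\le a\le (x+y)/2-b$. This yields the asserted identity. The only real step to be careful about is the identification of the inner integral with $\mathrm{vol}(\Lambda^{n}(X,Y))$; all remaining manipulations are a bookkeeping check that the Jacobian is unity and that the constraint system transforms exactly as claimed.
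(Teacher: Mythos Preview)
Your proof is correct and follows essentially the same route as the paper: both start from the Cartesian description of $\Lambda^{n+1}(x,y)$ in Proposition~\ref{w}(2), integrate out the last pair $(x_{n+1},y_{n+1})$ via the unit-Jacobian substitution $(a,b)=(x_{n+1}-y_{n+1},\,y_{n+1})$, and identify the remaining inner polytope as $\Lambda^{n}(a+2b,a)$. The only cosmetic difference is that the paper performs the change of variables first and then names the inner region, whereas you identify the inner region as $\Lambda^n(X,Y)$ before substituting.
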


\begin{proof}
Consider Cartesian coordinates on $\Gamma^{n+1}(x,y):$
$$ 0\leq x_1 \leq \cdots \leq x_n \leq x_{n+1} \leq \frac{x+y}{2}, \ \ \ \ \ 0 \leq y_1 \leq \cdots \leq y_n \leq y_{n+1} \leq \frac{x-y}{2}, \ \ \ \ \ x_i \geq y_i.$$ Making the change of variables $\ (a,b) = (x_{n+1}-y_{n+1}, y_{n+1}) \ $ with Jacobian $\ 1, \ $ we obtain the polytope given by:
$$0 \leq b \leq \frac{x-y}{2}, \ \ \ \ \  0 \leq a \leq \frac{x+y}{2} -b,$$
$$ 0\leq x_1 \leq \cdots \leq x_n \leq a+b, \ \ \ \ \ 0 \leq y_1 \leq \cdots \leq y_n \leq b, \ \ \ \ \ x_i \geq y_i.$$
Therefore we have that:
$$\mathrm{vol}(\Lambda^{n+1}(x,y))  \ \ = \ \ \int_{\Lambda^{n+1}(x,y)}1dx_1\cdots dx_{n+1}dy_1\cdots dy_{n+1} \ \ = $$
$$\int_0^{\frac{x-y}{2}} \int_0^{\frac{x+y}{2} -b} \int_{\Lambda^{n}(a+2b,a)}1dx_1\cdots dx_{n}dy_1\cdots dy_{n} dadb \ \ = $$
$$ \int_0^{\frac{x-y}{2}} \int_0^{\frac{x+y}{2} -b}\mathrm{vol}(\Lambda^{n}(a+2b,a)) dadb.$$

\end{proof}

\begin{cor}\label{ul}
{\em For $\ 0  \leq y  \leq  x, \ $ and $\ n \in \mathbb{N}_{>0} \ $ the function $\ \mathrm{vol}(\Lambda^{n}(x,y)) \ $ is given by:
$$ \int_0^{\frac{x-y}{2}} \int_0^{\frac{x+y}{2} -b_1} \int_0^{b_1} \int_0^{a_1+b_1 -b_2}\cdots
 \int_0^{b_{n-1}} \int_0^{a_{n-1}+b_{n-1} -b_{n}} 1  da_ndb_nda_{n-1}db_{n-1} \cdots  da_1db_1. $$
}
\end{cor}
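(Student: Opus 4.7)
The plan is a straightforward induction on $n$, iterating the recursion of Proposition \ref{z} exactly $n$ times. The base case is $n=0$, where Proposition \ref{w}(1) gives $\mathrm{vol}(\Lambda^0(x,y)) = 1$; the case $n=1$ is then precisely the formula read off from Proposition \ref{z} with the integrand collapsed to the constant $1$.

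For the inductive step, suppose the claim holds for $n$, and apply Proposition \ref{z} to write
\begin{equation*}
\mathrm{vol}(\Lambda^{n+1}(x,y)) \ = \ \int_0^{\frac{x-y}{2}}\!\!\int_0^{\frac{x+y}{2}-b_1}\mathrm{vol}(\Lambda^{n}(a_1+2b_1,\,a_1))\, da_1\, db_1.
\end{equation*}
The crucial observation is the following reparametrization: setting $(x',y') := (a_1+2b_1,\,a_1)$, one computes
\begin{equation*}
\frac{x'-y'}{2} \ = \ b_1, \qquad \frac{x'+y'}{2}-b_2 \ = \ a_1+b_1-b_2,
\end{equation*}
so the outer two limits $\frac{x-y}{2}$ and $\frac{x+y}{2}-b$ of the recursion, when applied now to $(x',y')$, turn into exactly the next pair of limits $b_1$ and $a_1+b_1-b_2$ appearing in the nested integral in the statement. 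Plugging the inductive hypothesis for $\mathrm{vol}(\Lambda^{n}(a_1+2b_1,a_1))$ into the integrand and using this identification of limits, the result for $n+1$ follows.

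The main (mild) obstacle is purely bookkeeping: one must track that after $k$ applications of the recursion the arguments become $(x^{(k)}, y^{(k)}) = (a_k+2b_k, a_k)$ and verify that the identities $\tfrac{x^{(k)}-y^{(k)}}{2}=b_k$ and $\tfrac{x^{(k)}+y^{(k)}}{2}-b_{k+1}=a_k+b_k-b_{k+1}$ hold uniformly in $k$, so that the nested integral is generated step by step in the exact shape stated. There is no analytical difficulty beyond this; no interchange of integration order or convergence subtlety is needed, since at each stage we are only rewriting the volume of a compact polytope as an iterated Lebesgue integral over the slicing given in Proposition \ref{z}.
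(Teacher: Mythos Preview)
Your argument is correct and is exactly the approach the paper takes: its proof consists of the single line ``Follows iterating Proposition \ref{z},'' which is precisely the induction you spell out, with the key reparametrization $(x',y')=(a_1+2b_1,a_1)$ making the next pair of limits $b_1$ and $a_1+b_1-b_2$ appear. There is nothing to add.
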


\begin{proof}
Follows iterating Propositon \ref{z}.
\end{proof}

\begin{prop}{\em The Catalan function $\  C(x,y) \ $ satisfies the integral equation:
$$C(x,y) \ = \  1 \ + \ \int_0^{\frac{x-y}{2}} \int_0^{\frac{x+y}{2} -b} C(a+2b,a) dadb. $$
}
\end{prop}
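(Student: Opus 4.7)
The plan is to derive the integral equation by directly manipulating the series definition of $C(x,y)$, splitting off the $n=0$ term and applying the recursion from Proposition \ref{z} to the remaining terms.

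First I would isolate the $n=0$ contribution. By Proposition \ref{w}(1), $\mathrm{vol}(\Lambda^0(x,y))=1$, so
\[
C(x,y)\ =\ 1\ +\ \sum_{n=0}^{\infty}\mathrm{vol}(\Lambda^{n+1}(x,y)).
\]
Next I would rewrite each summand on the right using Proposition \ref{z}:
\[
\mathrm{vol}(\Lambda^{n+1}(x,y))\ =\ \int_0^{\frac{x-y}{2}}\!\int_0^{\frac{x+y}{2}-b}\mathrm{vol}(\Lambda^{n}(a+2b,a))\,da\,db.
\]
Substituting this expression into the previous display then reduces the problem to interchanging the infinite sum with the double integral so as to recognize $C(a+2b,a)$ inside the integrand.

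The key step is therefore the justification of the swap $\sum_n\int\int = \int\int\sum_n$. For this I would invoke the Corollary following Proposition \ref{z}: on any bounded region the series $\sum_n\mathrm{vol}(\Lambda^n(\cdot,\cdot))$ converges uniformly, majorized term-by-term by $\frac{(x+y)^n(x-y)^n}{n!\,n!}$. Since the region of integration $\{(a,b):0\leq b\leq\tfrac{x-y}{2},\ 0\leq a\leq\tfrac{x+y}{2}-b\}$ is bounded, and the arguments $(a+2b,a)$ stay in a bounded set on it, uniform convergence (equivalently, dominated convergence) legitimates the interchange. Carrying it out gives
\[
\sum_{n=0}^{\infty}\mathrm{vol}(\Lambda^{n+1}(x,y))\ =\ \int_0^{\frac{x-y}{2}}\!\int_0^{\frac{x+y}{2}-b}\sum_{n=0}^{\infty}\mathrm{vol}(\Lambda^{n}(a+2b,a))\,da\,db\ =\ \int_0^{\frac{x-y}{2}}\!\int_0^{\frac{x+y}{2}-b}C(a+2b,a)\,da\,db,
\]
and combining with the initial display yields the claimed integral equation.

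The only mild obstacle is the formal justification of the interchange of limit and integral, which is routine given the explicit uniform bound already established. Everything else is bookkeeping from Propositions \ref{w} and \ref{z}.
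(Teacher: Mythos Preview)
Your proof is correct and follows exactly the approach the paper intends: the paper's own proof is the one-line ``Follows from Propositions \ref{w} and \ref{z},'' and you have spelled out precisely that argument, with the added care of justifying the interchange of sum and integral via the uniform-convergence Corollary. One small slip: that Corollary appears right after Proposition \ref{w}, not after Proposition \ref{z}.
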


\begin{proof}
Follows from Propositions  \ref{w} and \ref{z}.
\end{proof}

For $\ n \geq 0 \ $ we define by recursion the  functions $\ I_n: \mathbb{R}_{\geq 0}^2 \longrightarrow \mathbb{R}\ $ as follows:
$$I_0 \ = \ 1, \ \ \ \ \ \mbox{and} \ \ \ \ \ I_n(a,b) \ = \ \int_{0}^{b}\int_{0}^{a+b-v}I_{n-1}dudv. $$
The function $\ I_n \ $ is polynomial and admits a finite expansion:
$$I_n(a,b) \ = \ \sum_{k,l}I^n_{k,l}\frac{a^k}{k!}\frac{b^l}{l!}  \ \ \ \ \ \mbox{with} \ \ \ I^n_{k,l} \in \mathbb{Z}.$$

\begin{lem}{\em For $l \geq 1,$ and $n \geq 1,$ the coefficient $ \ I^n_{k,l} \ $ satisfies the recuersion:
$$ I^n_{k,l} \ = \ \sum_{p=0}^{l-1}\sum_{q=0}^{l-p-1} (-1)^q {l \choose p}{l-p-1 \choose q}I^{n-1}_{k+p+q-1,\ l-p-q-1}.$$
Moreover: $\ I^0_{k,l} \ = \ \delta_{k0}\delta_{l0}, \ \ \ I^n_{k,l} \ = \ 0 \ \ $ for $\ \ n > l, \ \ \ $ and
$\ \ I^n_{k,0} \ = \ 0 \ $ for $\ n > 0.$
}
\end{lem}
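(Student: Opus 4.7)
The initial conditions and vanishings are routine. From $I_0\equiv 1$ one reads off $I^0_{k,l}=\delta_{k,0}\delta_{l,0}$; setting $b=0$ in the defining integral gives $I_n(a,0)=0$ for $n\geq 1$, hence $I^n_{k,0}=0$. Since one outer integration $\int_0^b\!\cdots\,dv$ raises the minimum $b$-degree by one, an easy induction on $n$ shows that $I_n(a,b)$ is divisible by $b^n$, giving $I^n_{k,l}=0$ for $n>l$.

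For the main recursion the plan is direct term-by-term computation. I substitute $I_{n-1}(u,v) = \sum_{k',l'} I^{n-1}_{k',l'}\,u^{k'}v^{l'}/(k'!\,l'!)$ into the definition of $I_n$; the inner $u$-integration produces $(a+b-v)^{k'+1}/(k'+1)$. To expose the two-layer binomial structure appearing in the claim, I expand this in two successive steps that separate the three variables: first
\[ (a+b-v)^{k'+1} \ = \ \bigl((a-v)+b\bigr)^{k'+1} \ = \ \sum_p \binom{k'+1}{p}\, b^p\, (a-v)^{k'+1-p}, \]
and then $(a-v)^{k'+1-p} = \sum_q (-1)^q\binom{k'+1-p}{q}\, v^q a^{k'+1-p-q}$. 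Integrating the resulting monomials $v^{l'+q}$ over $[0,b]$ and reading off the coefficient of $a^k b^l/(k!\,l!)$ via the substitutions $k' = k+p+q-1$, $l' = l-p-q-1$ produces a double sum over $p,q$ of the shape announced in the lemma.

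The main obstacle is that the binomial factors coming directly out of this expansion are $\binom{k'+1}{p}\binom{k'+1-p}{q}$ together with a residual $1/(l'+q+1)$, not yet the cleaner $\binom{l}{p}\binom{l-p-1}{q}$ of the statement. The quickest reconciliation is to show that both the claim and the direct computation collapse to the same single-sum recursion
\[ I^n_{k,l}\ =\ \sum_{m=0}^{l-1} I^{n-1}_{k+m-1,\, l-m-1}, \]
which for the direct computation follows from the beta-function identity $\int_0^b v^{l'}(b-v)^r\,dv = l'!\,r!\,b^{l'+r+1}/(l'+r+1)!$, and for the claim follows after grouping the double sum by $m = p+q$ from the combinatorial identity
\[ \sum_{p+q=m}(-1)^q\binom{l}{p}\binom{l-p-1}{q}\ =\ 1,\qquad 0\leq m\leq l-1. \]
This last identity is immediate from the generating-function computation
\[ \sum_{p\geq 0}\binom{l}{p}\, z^p\, (1-z)^{l-p-1} \ = \ (1-z)^{-1}\bigl(z+(1-z)\bigr)^l \ = \ \frac{1}{1-z}, \]
whose coefficient of $z^m$ is $1$. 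These two reductions together yield the claimed double-sum formula.
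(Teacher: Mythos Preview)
Your proof is correct, but it follows a different and more circuitous route than the paper's. The paper simply writes
\[
\frac{(a+b-v)^{k+1}}{(k+1)!}\ =\ \sum_{r+p+q=k+1}(-1)^q\,\frac{a^r}{r!}\frac{b^p}{p!}\frac{v^q}{q!}
\]
via the multinomial theorem, integrates $v^{q+l}$ over $[0,b]$, and then reads off the coefficient of $\frac{a^r}{r!}\frac{b^s}{s!}$ with $s=p+q+l+1$; the factorials reorganize \emph{directly} into $\binom{s}{p}\binom{s-p-1}{q}$, yielding the stated double sum without any auxiliary identity. In fact your own first expansion $(a+b-v)^{k'+1}=((a-v)+b)^{k'+1}$ leads to exactly the same place: if you had carried the bookkeeping through to the $(k,l)$ variables, the intermediate factors $\binom{k'+1}{p}\binom{k'+1-p}{q}\cdot\frac{1}{l'+q+1}$ collapse (using $k'+1=k+p+q$ and $l'+q+1=l-p$) precisely to $\binom{l}{p}\binom{l-p-1}{q}$, so the detour was unnecessary. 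That said, your detour buys something the paper does not record: the observation that the lemma's double sum telescopes, via $\sum_{p+q=m}(-1)^q\binom{l}{p}\binom{l-p-1}{q}=1$, to the much cleaner recursion $I^n_{k,l}=\sum_{m=0}^{l-1} I^{n-1}_{k+m-1,\,l-m-1}$, which is a genuine simplification.
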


\begin{proof} We have that
$$I_n(a,b) \ = \ \sum_{k,l}I^{n-1}_{k,l}\int_{0}^{b}\int_{0}^{a+b-v}\frac{u^k}{k!}\frac{v^l}{l!}dudv \ = $$
$$\sum_{k,l}I^{n-1}_{k,l}\int_{0}^{b}\frac{(a+b-v)^{k+1}}{(k+1)!}\frac{v^l}{l!}dv \ \ = \ \
\sum_{k,l, \ r+p+q=k+1}(-1)^qI^{n-1}_{k,l}\frac{a^r}{r!}\frac{b^p}{p!}\int_{0}^{b}\frac{v^{q}}{q!}\frac{v^l}{l!}dv \ = $$
$$\sum_{k,l,\ r+p+q=k+1}(-1)^q{p+q+l+1 \choose p}{q+l \choose q}I^{n-1}_{k,l}\frac{a^r}{r!}\frac{b^{p+q+l+1}}{(p+q+l+1)!} \ = $$
$$\sum_{r,\ s>0}\bigg(\sum_{p=0}^{s-1}\sum_{q=0}^{s-p-1}(-1)^q{s \choose p}{s-p-1 \choose q}I^{n-1}_{r+p+q-1, \ s-p-q-1}  \bigg) \frac{a^r}{r!}\frac{b^s}{s!}, $$
where we have set $\ s=p+q+l+1 \ $ to get the last expression.

\end{proof}

Our final result provides an explicit formula for the Catalan function $ \ C(2x). \ $

\begin{thm}{\em  For $\ x \geq 0, \ $ the Catalan function $ \ C(2x) \ $ is given by
$$ C(2x) \ \ = \ \  1 \ + \  \sum_{m=2}^{\infty}\bigg(\sum_{k+l=m-2}(\sum_{n=1}^{l+1}I^{n-1}_{k,l}) \sum_{p=0}^{k+1}(-1)^{k+1-p}{m \choose p}{m-p-1 \choose l}\bigg) \frac{x^m}{m!}.$$

}
\end{thm}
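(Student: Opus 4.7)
The plan is to make each $\mathrm{vol}(\Lambda^n(2x,0))$ explicit as a polynomial in $x$ via Corollary~\ref{ul} and the preceding lemma, then collect the coefficient of $x^m/m!$ and match it to the stated expression.

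Setting $(x,y)\mapsto(2x,0)$ in Corollary~\ref{ul}, the inner $2(n-1)$ integrations are, by the recursive definition of the $I_n$'s, exactly those that reconstruct $I_{n-1}(a_1,b_1)$; hence for $n\ge 1$
$$\mathrm{vol}(\Lambda^n(2x,0)) \;=\; \int_0^x\!\int_0^{x-b_1}\! I_{n-1}(a_1,b_1)\,da_1\,db_1.$$
Substituting the expansion $I_{n-1}(a,b)=\sum_{k,l} I^{n-1}_{k,l}\,a^kb^l/(k!\,l!)$ and integrating termwise, the $a_1$-integration yields $(x-b_1)^{k+1}/(k+1)$, which I expand by the binomial theorem; the subsequent $b_1$-integration of $b_1^{k+1-p+l}$ supplies $x^{m-p}/(m-p)$ with $m:=k+l+2$. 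Thus each $(k,l)$ pair contributes
$$\frac{I^{n-1}_{k,l}}{k!\,l!\,(k+1)}\,x^m\sum_{p=0}^{k+1}(-1)^{k+1-p}\binom{k+1}{p}\frac{1}{m-p}$$
to $\mathrm{vol}(\Lambda^n(2x,0))$.

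The one nontrivial bookkeeping step is to recast these prefactors in the form displayed. A direct factorial manipulation establishes the identity
$$\frac{\binom{k+1}{p}}{m-p}\;=\;\frac{1}{m\binom{m-1}{l}}\binom{m}{p}\binom{m-p-1}{l}\qquad(m=k+l+2),$$
and combined with the elementary simplification $k!\,l!\,(k+1)\cdot m\binom{m-1}{l}=m!$ it shows that the coefficient of $x^m/m!$ in $\mathrm{vol}(\Lambda^n(2x,0))$ equals
$$\sum_{k+l=m-2} I^{n-1}_{k,l}\sum_{p=0}^{k+1}(-1)^{k+1-p}\binom{m}{p}\binom{m-p-1}{l}.$$

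Finally, summing on $n\ge 1$ and invoking the lemma's vanishing $I^{n-1}_{k,l}=0$ for $n-1>l$, the outer sum truncates to $n\in\{1,\dots,l+1\}$; adding the $n=0$ contribution $\mathrm{vol}(\Lambda^0(2x,0))=1$ reproduces the claimed formula. Uniform convergence of the resulting double series on bounded sets follows a fortiori from the bound on $\sum_n\mathrm{vol}(\Lambda^n(x,y))$ established immediately after Proposition~\ref{w}, so no analytic obstacle arises; the substantive content lies entirely in the binomial identity displayed above.
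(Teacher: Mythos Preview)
Your argument is correct and follows the same route as the paper: express $\mathrm{vol}(\Lambda^n(2x,0))$ via Corollary~\ref{ul} as $\int_0^x\!\int_0^{x-b}I_{n-1}(a,b)\,da\,db$, expand $I_{n-1}$ termwise, integrate, and collect the coefficient of $x^m/m!$. The only difference is expository: the paper passes directly from the double integral to the binomial form $\binom{m}{p}\binom{m-p-1}{l}/m!$ without isolating the identity you display, and it does not comment on why the $n$-sum truncates at $l+1$; your proof makes both of those points explicit, which is a genuine improvement in clarity but not a different method.
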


\begin{proof}
Using Corollary \ref{ul}  we get that
$$C(2x) \ = \ C(2x,0) \ = \ 1 \ + \ \sum_{n=1}^{\infty}\int_{0}^{x}\int_{0}^{x-b}I_{n-1}(a,b)dadb\ \ = $$
$$1\ + \ \sum_{n=1}^{\infty}  \sum_{k,l}I^{n-1}_{k,l} \int_{0}^{x}\int_{0}^{x-b}\frac{a^k}{k!}\frac{b^l}{l!}dadb \ \ = $$
$$1\ + \ \sum_{n=1}^{\infty}  \sum_{k,l}I^{n-1}_{k,l}\sum_{p=0}^{k+1}(-1)^{k+1-p}{k+l+2 \choose p}{k+l+1-p \choose l} \frac{x^{k+l+2}}{(k+l+2)!} \ \ = $$
$$1 \ + \  \sum_{m=2}^{\infty}\bigg(\sum_{k+l=m-2}(\sum_{n=1}^{l+1}I^{n-1}_{k,l}) \sum_{p=0}^{k+1}(-1)^{k+1-p}{m \choose p}{m-p-1 \choose l}\bigg) \frac{x^m}{m!} .$$
\end{proof}

\noindent $\mathbf{Acknowledgements.}$ \ \ We thank Tom Koornwinder for kindly pointing out to us the connection between the continuous binomial coefficients and the Bessel functions, his comments and suggestions lead to substantial improvements on a early version of this work. We also thank  Jos\'e Luis Ram\'irez.

\noindent lnrdcano@gmail.com \\
\noindent Departamento de Matem\'aticas, Universidad Sergio Arboleda, Bogot\'a, Colombia\\

\noindent ragadiaz@gmail.com\\
\noindent Departamento de Matem\'aticas, Pontificia Universidad Javeriana, Bogot\'a, Colombia

\end{document}